\DeclareMathOperator*{\argmin}{argmin}
\newcommand{\norm}[2]{\left\Vert #1\right\Vert_{#2}}
\newcommand{\X}{\mathcal X}
\newcommand{\Y}{\mathcal Y}
\newcommand{\V}{\mathcal V}
\newcommand{\fdag}{f^\dagger}
\newcommand{\gobs}{g^{\mathrm{obs}}}
\newcommand{\gdag}{g^\dagger}
\newcommand{\R}{\mathbb R}
\newcommand{\N}{\mathbb N}
\newcommand{\falhat}{\hat f_\alpha}
\newcommand{\T}[1]{\mathcal T\left(#1;\gdag\right)}
\renewcommand{\S}[1]{\mathcal S\left(#1;\gobs\right)}
\newcommand{\Prob}[1]{\mathbb P \left[#1\right]}
\newcommand{\E}[1]{\mathbb E \left[#1\right]}
\newcommand{\supp}{\mathrm{supp}}
\def\fad{f_\alpha^\delta}
\def\fdag{f^\dag}
\newtheoremstyle{slanted}
{}
{}
{\slshape}
{}
{\bfseries}
{.}
{ }
{}
\newtheoremstyle{roman}{}{}{\rmfamily}{}{\bfseries}{.}{ }{}
\theoremstyle{plain}
\newtheorem{thm}{Theorem}
\newtheorem{cor}{Corollary}
\newtheorem{pro}{Proposition}
\newtheorem{ass}{Assumption}
\theoremstyle{slanted}
\newtheorem{rem}{Remark}
\theoremstyle{roman}
\newtheorem{ex}{Example}
\title{Convergence Analysis of (Statistical) Inverse Problems under Conditional Stability Estimates}
\author{\textsc{Frank Werner}\thanks{University of Goettingen, Institute for Mathematical Stochastics, Goldschmidtstraße 7, 37077 G\"ottingen, Germany, e-mail: f.werner@math.uni-goettingen.de.} \,\, and \textsc{Bernd Hofmann}\thanks{Faculty of Mathematics, Chemnitz University of Technology, 09107 Chemnitz, Germany, \newline e-mail: hofmannb@mathematik.tu-chemnitz.de.}}
\date{\today}
\begin{document}

\maketitle

\begin{quote}
\textbf{Abstract:} Conditional stability estimates require additional regularization for obtaining stable approximate solutions if the validity area of such estimates is not completely known. In this context, we consider ill-posed nonlinear inverse problems in Hilbert scales satisfying conditional stability estimates characterized by general concave index functions. For that case, we exploit Tikhonov regularization and provide convergence and convergence rates of regularized solutions for both deterministic and stochastic noise. We further discuss a priori and a posteriori parameter choice rules and illustrate the validity of our assumptions in different model and real world situations.
\end{quote}

\begin{quote}
\noindent
{\small \textbf{Keywords:}
Statistical inverse problems,
conditional stability,
Hilbert scales,
Tikhonov regularization,
Lepskij principle,
convergence rates}
\end{quote}

\begin{quote}
\noindent
{\small \textbf{AMS-classification (2010):}
47J06, 65J20, 47A52
}
\end{quote}

\section{Introduction} \label{sec:intro}

In this paper, we investigate the operator equation
\begin{equation} \label{eq:opeq}
F(f)=g,
\end{equation}
which acts as model of an inverse problem with a (possibly nonlinear) \textit{forward operator}
\[
F : D(F) \subseteq \X \to \Y,
\]
with domain $D(F)$ mapping between the infinite dimensional separable real Hilbert spaces $\X$ with norm $\|\cdot\|$ and
$\Y$ with norm $\|\cdot\|_{\Y}$. Let denote by $\fdag \in D(F)$ the uniquely determined solution of
(\ref{eq:opeq}) for the exact right-hand side $g=F(\fdag) \in \Y$. Moreover let, as is typical for inverse problems,
(\ref{eq:opeq}) be \textit{locally ill-posed} at $\fdag$, which means that for closed balls $B_r(\fdag):=\{f \in D(F)~\big|~\|f-\fdag\|\le r \}$ around $\fdag$
with arbitrarily small radii $r>0$ there exist sequences $\{f_n\}_{n=1}^\infty \subset B_r(\fdag)$ such that $\liminf \limits_{n \to \infty}\|f_n-\fdag\| >0$,
but $\lim\limits_{n \to \infty}\|F(f_n)-F(\fdag)\|_{\Y} =0$ (cf., e.g., \cite[Def.~3]{HofPla18}). Consequently, in order to find \textit{stable approximate solutions} to equation
(\ref{eq:opeq}) based on observed noisy data $\gobs$ of $g$, some kind of stabilization is required. Our focus here is on \textit{variational regularization in a Hilbert scale} under \textit{conditional stability estimates}.

For considering the Hilbert scale  we introduce a densely defined (unbounded and closed) linear self-adjoint operator $L\colon D(L)
\subset \X \to \X$, which is strictly positive such that we have for some $m>0$
\begin{equation} \label{eq:m}
\|Lx\|\geq m \|x\|\qquad \mbox{for all} \qquad x\in D(L).
\end{equation}
The operator~$L$ satisfying \eqref{eq:m} generates a Hilbert scale $\{\X_\nu\}_{\nu \in
  \mathbb{R}}$ with $\X_0:=\X$, $\X_\nu=D(L^\nu)$,  and with
corresponding  norms $\|x\|_\nu:=\|L^\nu x\|_\X$.
It is well-known that for a triple of indices $-a < t \leq s$ the \textit{interpolation inequality}
\begin{equation}\label{eq:interpolationX}
\|f\|_t \leq \|f\|_{-a}^{\frac{s-t}{s+a}} \|f\|_s^{\frac{t+a}{s+a}}
\end{equation}
holds for all $f \in \X_s$.

\smallskip

In the following, we will consider a mixed data model, which allows to treat both deterministic and stochastic error contributions. Therefore recall the notion of a Hilbert space process $Z$ on $\Y$, which is a bounded linear mapping $Z : \Y \to \mathbf L^2 \left(\Omega, \mathcal A, \mathbb P\right)$ with a probability space $\left(\Omega, \mathcal A, \mathbb P\right)$. Note that, by definition, $\Prob{Z \in \Y} = 0$, and it is common to write $\left\langle Z, g\right\rangle:= Z \left(g\right)$ for $g \in \Y$. A Hilbert space process $Z$ is called centered, if $\E{\left\langle Z,g\right\rangle} = 0$ for all $g \in \Y$, and it is called white, if $\text{Cov}\left[\left\langle Z, g_1\right\rangle, \left\langle Z, g_2\right\rangle\right] = \left\langle g_1, g_2\right\rangle$ for all $g_1, g_2 \in \Y$. A Hilbert space process $Z$ is called Gaussian, if $\left(\left\langle Z, g_1\right\rangle, ..., \left\langle Z, g_n\right\rangle\right)$ follows a multivariate Gaussian distribution for any choice of $g_1, ..., g_n \in \Y$ and $n \in \N$. With this notion in mind, we consider the data model
\begin{equation}\label{eq:stochastic_noise_model}
\gobs = \gdag + \sigma Z + \delta \xi
\end{equation}
with a centered Gaussian white noise $Z$ on $\Y$, some (deterministic) element $\xi \in \Y$ with $\norm{\xi}{\Y} \leq 1$, and parameters $\sigma, \delta > 0$. Model \eqref{eq:stochastic_noise_model} covers both deterministic and stochastic error contributions, parameterized by $\delta$ and $\sigma$ respectively, see \cite{BHMR07} for examples. Note that if $\sigma = 0$, then $\gobs \in \Y$, the measurements $\gobs$ at hand are purely deterministic and satisfy the classical bound
\begin{equation}\label{eq:noiselevel}
\|\gobs-g\|_{\Y} \leq \delta
\end{equation}
with the \textit{noise level} $\delta>0$. In this case we concretize the situation by assigning $\gobs=g^\delta$. If $\sigma > 0$, then $\Prob{\gobs \in \Y} = \Prob{Z \in \Y} = 0$ and hence \eqref{eq:stochastic_noise_model} has to be understood in a weak sense, this is for each $g \in \Y$ we observe
\begin{equation}\label{eq:weak_noise}
\left\langle \gobs ,g\right\rangle = \left\langle \gdag,g\right\rangle + \delta \left\langle g, \xi\right\rangle + \sigma \left\langle Z, g\right\rangle,
\end{equation}
where, by definition, $\left\langle Z, g\right\rangle$ is a random variable with distribution $\mathcal N \left(0,\norm{g}{\Y}^2\right)$, and for two elements $g_1, g_2 \in \Y$ the dependency structure is encoded in $\text{Cov}\left[\left\langle Z, g_1\right\rangle,\left\langle Z, g_2\right \rangle\right] = \E{\left\langle Z, g_1\right\rangle \left\langle Z, g_2\right \rangle} = \left\langle g_1, g_2\right\rangle$.

\smallskip

Initially, we pose two assumptions which are valid throughout the paper. The first assumption refers to properties of $F$, $D(F)$ and $\fdag$. Moreover, it defines occurring indices $a$ and $s,u$ in the Hilbert scale
under consideration.

\begin{ass} \label{ass:basic1}
\begin{itemize} \item[]
\item[(a)] The domain $D(F)$ of $F$ is a convex and closed subset of $\X$.
\item[(b)] The operator $F: D(F) \subseteq \X\to \Y$ is weak-to-weak sequentially continuous, i.e. $x_n \rightharpoonup x_0$ in $\X$ with $x_n \in D(F),\;n \in \N,$ and $\bar x \in D(F)$ implies $F(x_n) \rightharpoonup F(\bar x)$ in $Y$.
\item[(c)] There exists $u > 0$ such that $\fdag \in \X_u$, where $\fdag \in D(F)$ is the unique solution to (\ref{eq:opeq}) for given right-hand side $g$.
\item[(d)] There are further indices $a,s \in \R$ such that $a \ge 0,$ $0 \le s < u \le 2s+a$, and $-a<s$.
\end{itemize}
\end{ass}

\smallskip

In the following we will need closed balls and their intersections with the domain of definition $D(F)$ of $F$, this is
$$B^\nu_\mu(\bar f) := \left\{f \in \X_\nu~\big|~ \|f-\bar f\|_\nu \leq \mu\right\}, \qquad D^\nu_\mu\left(\bar f\right) := B^\nu_\mu(\bar f)\cap D(F)$$
in $\X_\nu\;(\nu \in \mathbb{R})$ with center $\bar f \in \X_\nu$  and radius $\mu\;(0<\mu \le \infty)$, where we write for simplicity $B_\mu(\bar f)$ and $D_\mu(\bar f)$ instead of $B^0_\mu(\bar f)$ and $D^0_\mu(\bar f)$, respectively. Now we are in position to introduce the second assumption in form of a conditional stability estimate.
\begin{ass} \label{ass:basic2}
There are a concave index function\footnote{We call a function $\varphi\colon [0,\infty) \to [0,\infty)$ index function if it is continuous, strictly increasing and satisfies the boundary condition $\varphi(0) = 0$.} $\varphi$, values $\theta \ge 0$, $\;\rho>0$ and $R>0$ as well as a subset $Q$ of $D_\rho^\theta\left(\fdag\right) \subset D(F)$ such that the conditional stability estimate
\begin{equation}\label{eq:stability}
\|f - \fdag\|_{-a} \leq R \, \varphi\left(\|F(f)-F(\fdag)\|_{\Y}\right)
\end{equation}
holds for all $f \in Q$, where the multiplier $R$ may depend on $a,\;\varphi$ and $Q$.
\end{ass}

\smallskip

There are two main sources for verifying conditional stability estimates of the form \eqref{eq:stability}:
\begin{itemize}
\item[(A)] Local structural conditions for the nonlinearity of $F$,
\item[(B)] Global inequalities of the forward operator $F$.
\end{itemize}
In general, the local nonlinearity conditions in (A) require G\^ateaux or Fr\'echet derivatives $F^\prime(f)$ in the solution point $f=\fdag$ or in small intersected balls $f \in D_r(\fdag)$ around $\fdag$, whereas the global inequalities
in (B) do not need derivatives of $F$ at all. Such global inequalities typically occur in parameter identification problems for partial differential equations, sometimes in connection with Carleman estimates.
In the Appendix we present for motivation and illustration three examples for relevant sets $Q$. Feasible elements $f$, for which the conditional stability estimate \eqref{eq:stability} is valid, belong
in all three examples to the intersection of $D(F)$ with one or two closed balls of the type $B^\nu_\mu(\bar f)$. The Examples~\ref{ex:Q1} and \ref{ex:Q2} in the Appendix refer to local conditions in the sense of (A), whereas
Examples~\ref{ex:Q3} and \ref{ex:stab} are based on global inequalities in the sense of (B).

\bigskip

Under the stated assumptions we search for approximate solutions $\falhat$ to $\fdag$, which are \textit{regularized solutions} as minimizers
\begin{equation}\label{eq:tik_gen}
\falhat \in \argmin_{f\in D(F)} \left[ \S{F\left(f\right)} + \alpha \norm{f}{s}^2\right]
\end{equation}
of the Tikhonov functional with $s$-norm square penalty $\norm{f}{s}^2$ and a data fidelity term $\S{\cdot}$. If $\sigma = 0$ in \eqref{eq:stochastic_noise_model}, i.e. if we have deterministic data $\gobs = g^\delta \in\Y$, we will consider the most common choice $\S{g} = \frac12 \norm{g - g^\delta}{\Y}^2$, i.e. \eqref{eq:tik_gen} specializes to
\begin{equation}\label{eq:tik}
\falhat \in \argmin\limits_{f\in D(F)} \left[ \frac12 \norm{F\left(f\right) - g^\delta}{\Y}^2 + \alpha \norm{f}{s}^2 \right]
\end{equation}

If $\sigma > 0$ in \eqref{eq:stochastic_noise_model}, then one has $\gobs \notin \Y$ with probability $1$ as discussed above, and hence $\norm{g-\gobs}{\Y}= +\infty$ a.s. for any $g \in\Y$. However, the functional $\T{g} = \frac12 \norm{g - \gdag}{\Y}^2$ can still be interpreted as an ideal data fidelity term, which is unavailable (as $\gdag$ is unknown). In view of \eqref{eq:weak_noise} it seems natural to use $\S{g} := \frac12\norm{g}{\Y}^2 - \left\langle g,\gobs\right\rangle$ as data fidelity term in that case, which ensures well-definedness and formally differs from $\T{\cdot}$ only by the additive constant $\frac12 \norm{\gobs}{\Y}^2$ (which is however $+\infty$ in the stochastic case). Hence, for stochastic noise we consider
\begin{equation}\label{eq:tik_stat}
\falhat \in \argmin_{f\in D(F)} \left[ \frac12\norm{F\left(f\right)}{\Y}^2 - \left\langle F\left(f\right),\gobs\right\rangle + \alpha \norm{f}{s}^2\right].
\end{equation}

Note that the penalty $f\mapsto\norm{f}{s}^2$ is a non-negative, convex and sequentially lower semi-continuous functional. By definition of the Hilbert scale, for all $s\ge 0$, this functional is \textit{stabilizing} in the sense that all its sublevel sets are weakly sequently compact in $\X$. Under Assumption~\ref{ass:basic1}, \textit{existence and stability} of approximate solutions $\falhat$ in the sense of \cite[Section~4.1.1]{ScKaHoKa12} are then evident, since Assumptions~3.11 and 3.22 in \cite{ScKaHoKa12} are satisfied (in case of stochastic noise, this is a.s. the case). Moreover, note that we always have for the minimizer of the Tikhonov functional $\falhat \in \X_s$, which means that there is a radius $\overline \rho>0$ such that $\falhat$ and $\fdag$ both belong to $D^s_{\overline \rho}(0)$. In order to obtain \textit{convergence} of the regularized solutions to $\fdag$, the interplay of the noise magnitude and the choice of the regularization parameter $\alpha>0$ must be appropriate.

To prove even \textit{convergence rates} in variational regularization, \textit{smoothness conditions} have to be imposed on $\fdag$. It will be shown that the conditional stability estimate \eqref{eq:stability} from  Assumption~\ref{ass:basic2} allows us to verify error estimates and convergence rates for the constructed approximate solutions and that the property
$\fdag \in \X_u \cap D(F)$ is sufficient to serve as such a smoothness condition if the index $u$ matches the set $Q$ from \eqref{eq:stability}.
In this context, however, we should emphasize that the stability estimate \eqref{eq:stability} is not powerful enough to yield alone stable approximate solutions to \eqref{eq:opeq} since $Q$ is in general not or not completely known. Therefore, the additional use of Tikhonov-type regularization is needed in order to force the approximate solutions into the set $Q$ of admissible elements for \eqref{eq:stability} for sufficiently small noise.

In the context of smoothness conditions we also mention commonalities between conditional stability estimates \eqref{eq:stability} and variational source conditions, which have become a major tool to derive convergence rates during the last decade. In case of the Hilbert scale regularization \eqref{eq:tik} and adapted to \eqref{eq:stability}, variational source conditions attain the form
\begin{equation} \label{eq:vsc}
\|f-\fdag\|_{-a} \le \|f\|_s^2-\|\fdag\|_s^2+ R\,\varphi(\|F(f)-F(\fdag)\|_{\Y})\qquad \mbox{for all}\quad f \in M,
\end{equation}
valid for some set of admissible elements $M$. Variational source conditions of the form \eqref{eq:vsc} with $\varphi\left(t\right) = \sqrt{t}$ have been introduced in \cite{HoKaPoSc07} and appeared recently for example in \cite{BurFleHof13,Fle18,Flemmingbuch18,Gras10,HofMat12,HohWei15,Scherzerbuch09,ScKaHoKa12}. Similar to conditional stability estimates, variational source conditions express in an implicit way both nonlinearity conditions and solution smoothness of the underlying nonlinear inverse problem.
	
There is a certain connection between conditional stability estimates \eqref{eq:stability} and variational source conditions, which depends very much on the set $M$. Since the difference  $\|f\|_s^2-\|\fdag\|_s^2$ may attain positive and negative values for varying $f \in M$, there is no immediate connection. However, if $M$ is such that the roles of $f$ and $\fdag$ in \eqref{eq:vsc} can be interchanged, then each variational source condition immediately implies a conditional stability estimate as examined in \cite{HohWei15}. If \eqref{eq:vsc} is validated based on spectral source conditions and nonlinearity estimates, this will in general not be the case. For another approach to variational source conditions with general convex penalty functionals in the Tikhonov regularization of linear problems we refer to \cite{HKM19,Kind16}.
	
More recently, there have been approaches (see e.g. \cite{HohWei15,HohWei17,HohWei17b,WeiSprHoh18}) to verify variational source conditions directly for specific problem instances without relying on nonlinearity assumptions or spectral source conditions or on both. In this case, it can happen that the set $M$ allows to interchange $f$ and $\fdag$ in \eqref{eq:vsc}, and hence also a conditional stability estimate follows, see e.g. Example \ref{ex:stab} in the appendix.

\bigskip

The remainder of the paper is organized as follows: The focus of Section~\ref{sec:Det} is on convergence and convergence rate assertions for deterministic inverse problems. As main result of Section~\ref{sec:Det}, in Theorem~\ref{thm:deterministc} and its Corollary \ref{cor:rates} convergence rates for general concave index functions $\varphi$ in the conditional stability estimate  \eqref{eq:stability} are formulated and proven. This section closes a gap in the theory by extending the results recently published in \cite{EggerHof18} from the H\"older case to the case of general concave index functions. Section~\ref{sec:Stat} is the statistical counterpart to Section~\ref{sec:Det} with Theorem~\ref{thm:stochastic} and Corollary \ref{cor:stoch} as main result concerning convergence rates. In the Appendix we finally discuss a series of motivating examples.

\section{Deterministic Inverse Problems} \label{sec:Det}

In this section we consider a \textit{deterministic noise model}, this is \eqref{eq:stochastic_noise_model} with $\sigma = 0$. Recall that this implies $\gobs \in \Y$, $\|\gobs-g\|_{\Y} \leq \delta$ and we write $\gobs=g^\delta$ and $\falhat=f_\alpha^\delta$. Based on Assumption~\ref{ass:basic1} the following proposition on \textit{convergence} is an immediate consequence of \cite[Theorem~4.3 and Corollary~4.6]{ScKaHoKa12}. In this context, we also take into account the usual properties of Hilbert scales, moreover the Kadec-Klee property of Hilbert spaces and the fact that $\fdag$ is assumed to be the unique solution to \eqref{eq:opeq}
and sufficiently smooth.

\begin{pro} \label{pro:convergence}
Let $\alpha=\alpha(\delta)$ (a priori choice) or $\alpha=\alpha(\delta,g^\delta)$ (a posteriori choice) be choices of the regularization parameter $\alpha>0$ satisfying the limit conditions
\begin{equation} \label{eq:conv}
\alpha \to 0 \qquad \mbox{and} \qquad \frac{\delta^2}{\alpha} \to 0 \qquad \mbox{as} \qquad \delta \to 0,
\end{equation}
then we have under Assumption~\ref{ass:basic1} and for $\delta_n \to 0$ as $n \to \infty$, $\alpha_n=\alpha(\delta_n)$ or $\alpha_n=\alpha(\delta_n,g^{\delta_n})$, and $f_n=f_{\alpha_n}^{\delta_n}$
\begin{equation} \label{eq:conv1}
\lim \limits_{n \to \infty} \|F(f_n)-g\|_{\Y}=0,
\end{equation}
\begin{equation} \label{eq:conv2}
\lim \limits_{n \to \infty} \|f_n\|_s = \|\fdag\|_s,
\end{equation}
and
\begin{equation} \label{eq:conv3}
\lim \limits_{n \to \infty} \|f_n-\fdag\|_\nu=0\qquad \mbox{for all} \qquad 0 \le \nu \le s.
\end{equation}
\end{pro}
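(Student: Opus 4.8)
The plan is to reduce everything to the standard convergence theory for Tikhonov-type variational regularization in the deterministic setting, as developed in \cite{ScKaHoKa12}, and then to exploit the specific structure of the Hilbert scale and the uniqueness and smoothness of $\fdag$. Concretely, I would first note that the penalty functional $f \mapsto \|f\|_s^2$ is a non-negative, convex, weakly sequentially lower semi-continuous and stabilizing functional on $\X$ (all sublevel sets are weakly sequentially compact, by the spectral properties of $L$), and that by Assumption~\ref{ass:basic1}(a),(b) the operator $F$ and its domain satisfy the standing assumptions (3.11 and 3.22) of \cite{ScKaHoKa12}. Since $\fdag \in \X_u \subseteq \X_s$ by Assumption~\ref{ass:basic1}(c),(d), the element $\fdag$ is an admissible point with finite penalty, so it is in particular the $\|\cdot\|_s$-minimizing solution (indeed the unique solution) of \eqref{eq:opeq}. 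The limit conditions \eqref{eq:conv} are exactly the classical parameter-choice requirements $\alpha \to 0$, $\delta^2/\alpha \to 0$ that appear in \cite[Theorem~4.3]{ScKaHoKa12} for an a priori choice, and the corresponding a posteriori (discrepancy-type) statement is \cite[Corollary~4.6]{ScKaHoKa12}.

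Next I would invoke those two results directly to obtain, along the sequence $\delta_n \to 0$ with $f_n = f_{\alpha_n}^{\delta_n}$, subsequential weak convergence of $f_n$ to a solution of \eqref{eq:opeq}, together with convergence of the penalty values $\|f_n\|_s \to \|\fdag\|_s$ and convergence of the residuals $\|F(f_n) - \gdag\|_\Y \to 0$; the latter is precisely \eqref{eq:conv1}. Because $\fdag$ is the \emph{unique} solution of \eqref{eq:opeq}, the usual subsequence-subsequence argument upgrades weak subsequential convergence to weak convergence of the full sequence: $f_n \rightharpoonup \fdag$ in $\X$, and likewise $f_n \rightharpoonup \fdag$ in $\X_s$ (since $L^s$ is a bounded bijection between the relevant scale spaces, weak convergence transfers). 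Combining weak convergence in $\X_s$ with convergence of the norms $\|f_n\|_s \to \|\fdag\|_s$ — which is \eqref{eq:conv2} — and the Kadec--Klee property of the Hilbert space $\X_s$ yields strong convergence $\|f_n - \fdag\|_s \to 0$.

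Finally, to get \eqref{eq:conv3} for every $0 \le \nu \le s$, I would use the interpolation inequality \eqref{eq:interpolationX}: for $0 \le \nu \le s$ one has (applying \eqref{eq:interpolationX} with the triple $-a < 0 \le \nu \le s$, using $a \ge 0$, or more simply directly interpolating between $\X_0$ and $\X_s$)
\[
\|f_n - \fdag\|_\nu \le \|f_n - \fdag\|_0^{\,1-\nu/s}\,\|f_n - \fdag\|_s^{\,\nu/s},
\]
and both factors are controlled: $\|f_n - \fdag\|_s \to 0$ from the Kadec--Klee step, while $\|f_n - \fdag\|_0 \to 0$ either from that same strong convergence in $\X_s$ (since $\|\cdot\|_0 \le m^{-s}\|\cdot\|_s$ by \eqref{eq:m}) or, in the borderline reading, by the analogous Kadec--Klee argument in $\X$ itself. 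Hence $\|f_n - \fdag\|_\nu \to 0$ uniformly over the stated range, which is \eqref{eq:conv3}.

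The routine parts are the verification that the abstract hypotheses of \cite{ScKaHoKa12} hold and the interpolation bookkeeping; the one genuinely load-bearing step is the passage from weak convergence plus norm convergence to strong convergence, which is exactly where uniqueness of $\fdag$ (to pin down the limit of every subsequence) and the Kadec--Klee property of Hilbert spaces (to convert weak-plus-norm into strong) are both essential. I expect no real obstacle beyond making sure the a posteriori case is covered by \cite[Corollary~4.6]{ScKaHoKa12} under the same limit conditions \eqref{eq:conv}.
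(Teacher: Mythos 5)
Your proposal is correct and follows essentially the same route as the paper, which gives no detailed argument but declares the proposition an immediate consequence of \cite[Theorem~4.3 and Corollary~4.6]{ScKaHoKa12} combined with the Hilbert-scale properties, the Kadec--Klee property, and the uniqueness and smoothness of $\fdag$ --- exactly the ingredients you assemble (weak subsequential convergence plus penalty-value convergence from the cited results, uniqueness to pin down the full-sequence limit, Kadec--Klee in $\X_s$ for strong convergence, interpolation for the intermediate norms). The only cosmetic remark is that weak convergence in $\X_s$ should be justified via the boundedness of $\|f_n\|_s$ and a subsequence argument rather than by the isometry $L^s$ alone, which you in effect have at hand from \eqref{eq:conv2}.
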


Based on conditional stability estimates required by Assumption~\ref{ass:basic2}, however, we can even prove \textit{convergence rates} for the regularized solutions. We remark that the set $Q$ of admissible elements with
associated radii and Hilbert scale indices and the index function $\varphi$ in this assumption need not be known. On the other hand, as long as the
choice of the regularization parameter $\alpha>0$ obeys the condition (\ref{eq:conv}), we have by formula (\ref{eq:conv3}) from Proposition~\ref{pro:convergence} that for fixed $\nu \in [0,s]$ and arbitrarily small radii $\rho>0$ there is some $\overline \delta>0$  such that $f_\alpha^\delta \in D^\nu_\rho(\fdag)$ whenever $0<\delta \le \overline \delta$.

In the following we will employ some convex analysis. The \textit{Fenchel conjugate} of a function $h : \mathbb R \to \bar{\mathbb{R}}$ is defined by $h^* \left(y\right):= \sup_{x \in \mathbb R} \left[ xy - h\left(x\right) \right]$. For an index function $h$ (defined on $\left[0,\infty\right)$) the Fenchel conjugate can be defined accordingly by extending $h$ to all of $\mathbb R$ by setting $h \left(-x\right) := \infty$ for $x > 0$, which leads to
\[
h^* \left(y\right):= \sup_{x \geq 0} \left[ xy - h\left(x\right) \right].
\]
Note that $h^*$ is always convex as a supremum over affine linear functions, and that for convex $h$ it holds $\left(h^*\right)^* = h$. For such $h$ we furthermore denote by $\partial h\left(x\right)$ the \textit{subdifferential} of $h$, i.e.
\[
\partial h \left(x\right) = \left\{y \in \R ~\big|~ h\left(z\right) \geq h\left(x\right) + y\left(z-x\right) \text{ for all } z \in \R\right\}.
\]	
The \textit{Fenchel-Young inequality} states that
\begin{equation}\label{eq:fenchel_young}
ab \leq h\left(a\right) + h^* \left(b\right)
\end{equation}
for all $a,b \in \R$ with equality if and only if $a \in \partial h^* \left(b\right)$, which for convex $h$ is in turn equivalent to $b \in \partial h\left(a\right)$. For more details on convex analysis we refer to \cite{r97}.

Now we are ready to formulate our first main theorem, which yields an error decomposition.
\begin{thm} \label{thm:deterministc}
Let the Assumptions~\ref{ass:basic1} and \ref{ass:basic2} hold and let the regularization parameter $\alpha>0$ be chosen a priori or a posteriori such that for sufficiently small noise levels $0<\delta \le \bar \delta$ the regularized solutions $\fad$ belong to the set $Q$ of admissible elements of the conditional stability estimate \eqref{eq:stability}. Then we have for such $\delta$ with the function
\[
\psi_{u,s,a}(t):=\left(\varphi(\sqrt{t})\right)^{\frac{2(u-s)}{a+u}},\qquad t>0,
\]
depending on the concave index function $\varphi$ and on the indices $a,s,u$ the error estimate
\begin{equation} \label{eq:errorest1}
\|\fad-\fdag\|_s^2 \le \frac{\delta^2}{\alpha}+C(-\psi_{u,s,a})^*\left(-\frac{1}{8C\alpha} \right)
\end{equation}
with a constant $C = C \left(R, \norm{\fdag}{u}, u,s,a\right)$.
\end{thm}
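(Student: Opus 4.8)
The plan is to combine the defining minimality of $\fad$ in \eqref{eq:tik} with the conditional stability estimate \eqref{eq:stability}, using the interpolation inequality \eqref{eq:interpolationX} to handle the penalty cross term and the Fenchel--Young inequality \eqref{eq:fenchel_young} to turn the resulting concave bound into the conjugate expression in \eqref{eq:errorest1}. First I would compare $\fad$ with the competitor $\fdag\in D(F)$. Since $\norm{F(\fdag)-g^\delta}{\Y}=\norm{g-g^\delta}{\Y}\le\delta$, the minimality inequality $\tfrac12\norm{F(\fad)-g^\delta}{\Y}^2+\alpha\norm{\fad}{s}^2\le\tfrac12\norm{g-g^\delta}{\Y}^2+\alpha\norm{\fdag}{s}^2$ can, after expanding $\norm{g-g^\delta}{\Y}^2-\norm{F(\fad)-g^\delta}{\Y}^2=-\norm{F(\fad)-F(\fdag)}{\Y}^2-2\langle F(\fad)-F(\fdag),g-g^\delta\rangle$ and bounding the last term by Cauchy--Schwarz, be rewritten as
\[
\alpha\bigl(\norm{\fad}{s}^2-\norm{\fdag}{s}^2\bigr)+\tfrac12\norm{F(\fad)-F(\fdag)}{\Y}^2\le\delta\,\norm{F(\fad)-F(\fdag)}{\Y}.
\]
Setting $h:=\fad-\fdag$ and using $\norm{\fad}{s}^2-\norm{\fdag}{s}^2=\norm{h}{s}^2+2\langle h,\fdag\rangle_s$ then isolates $\norm{h}{s}^2$ up to the cross term $2\alpha\langle h,\fdag\rangle_s$ and up to the residual $\norm{F(\fad)-F(\fdag)}{\Y}$, which is now the only quantity carrying the noise.

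The core step is the estimate of the cross term. By Assumption~\ref{ass:basic1}(c), $\fdag\in\X_u$, and by Assumption~\ref{ass:basic1}(d) one has $s<u\le2s+a$, hence $-a\le2s-u<s$; since $\fad,\fdag\in\X_s\subset\X_{2s-u}$, the self-adjointness of $L$ allows the rewriting $\langle h,\fdag\rangle_s=\langle L^{2s-u}h,L^u\fdag\rangle$, so that Cauchy--Schwarz gives $|\langle h,\fdag\rangle_s|\le\norm{h}{2s-u}\,\norm{\fdag}{u}$. Applying the interpolation inequality \eqref{eq:interpolationX} to $\norm{h}{2s-u}$ with the triple $-a\le2s-u<s$ yields
\[
|\langle h,\fdag\rangle_s|\le\norm{\fdag}{u}\,\norm{h}{-a}^{\,p}\,\norm{h}{s}^{\,1-p},\qquad p:=\frac{u-s}{s+a}\in(0,1],
\]
where in the boundary case $u=2s+a$ one has $p=1$ and no $\norm{h}{s}$-factor appears. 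Now the conditional stability estimate \eqref{eq:stability} applies, because $\fad\in Q$ for $\delta\le\bar\delta$, and replaces $\norm{h}{-a}$ by $R\,\varphi(\norm{F(\fad)-F(\fdag)}{\Y})$. A Young inequality then absorbs the factor $\norm{h}{s}^{1-p}$ into a small multiple of $\norm{h}{s}^2$; tracking the exponents, the leftover term is a constant multiple of $\alpha\,\varphi(\norm{F(\fad)-F(\fdag)}{\Y})^{2p/(1+p)}$, and since $\tfrac{2p}{1+p}=\tfrac{2(u-s)}{a+u}$ this equals $\alpha$ times a multiple of $\psi_{u,s,a}(\norm{F(\fad)-F(\fdag)}{\Y}^2)$. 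At this point one records that $\psi_{u,s,a}$ is itself a concave index function: $t\mapsto\varphi(\sqrt t)$ is concave as a composition of concave nondecreasing maps, and the exponent $\tfrac{2(u-s)}{a+u}$ lies in $(0,1]$ precisely because $u\le2s+a$; this concavity is what makes the final Fenchel--Young step legitimate.

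Combining the two displays, with $w:=\norm{F(\fad)-F(\fdag)}{\Y}$ and a constant $C'=C'(R,\norm{\fdag}{u},u,s,a)$, one arrives at an estimate which, schematically, reads $\alpha\norm{h}{s}^2\le\delta w-\tfrac12 w^2+C'\alpha\,\psi_{u,s,a}(w^2)$ (the $\norm{h}{s}^2$ produced by Young having been absorbed into the left-hand side). The final step removes $w$: one uses $\delta w-\tfrac12 w^2\le\tfrac12\delta^2$, keeping a fraction of the negative quadratic in reserve, and applies the Fenchel--Young inequality in the form $C'\alpha\,\psi_{u,s,a}(w^2)\le\lambda w^2+C'\alpha\,(-\psi_{u,s,a})^*\!\bigl(-\tfrac{\lambda}{C'\alpha}\bigr)$, which holds for every $\lambda>0$ because $(-\psi_{u,s,a})^*(y)\ge xy+\psi_{u,s,a}(x)$ for all $x\ge0$ and $y\in\R$. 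Choosing $\lambda$ so that $\lambda w^2$ is swallowed by the available quadratic residual, dividing by $\alpha$, and gathering all multiplicative constants into a single $C=C(R,\norm{\fdag}{u},u,s,a)$ gives \eqref{eq:errorest1}. Since only the minimality of $\fad$ and the membership $\fad\in Q$ entered, the same proof applies verbatim to an a posteriori choice of $\alpha$. I expect the real work to lie in the middle step: keeping the Hilbert-scale interpolation, the Young inequality and the Fenchel--Young estimate balanced so that the powers of $\alpha$ combine correctly and the numerical constants in \eqref{eq:errorest1} come out as stated; the conceptual content is simply that $\psi_{u,s,a}$ is the concave index function naturally attached to $\varphi$ and the indices $a,s,u$.
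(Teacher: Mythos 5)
Your proposal is correct and follows essentially the same route as the paper's proof: minimality of $\fad$, the duality/interpolation estimate for the cross term $\langle \fad-\fdag,\fdag\rangle_s$ via $\|\cdot\|_{2s-u}$, the conditional stability estimate, Young's inequality producing the exponent $\tfrac{2(u-s)}{a+u}$, and a final Fenchel--Young (sup) step yielding $(-\psi_{u,s,a})^*$, with only bookkeeping differences (retaining $\delta\,\|F(\fad)-F(\fdag)\|_{\Y}$ rather than $2\delta^2$, and a tunable $\lambda$ rather than the fixed $\tfrac{1}{8C\alpha}$) that merely shift the generic constants. One small remark: the Fenchel--Young inequality you invoke is just the definition of the conjugate and does not require concavity of $\psi_{u,s,a}$; that concavity is only needed later, in the rate corollary.
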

\begin{proof}
By assumption we have $\fad, \fdag \in Q$ for all $0<\delta \le \bar \delta$. Hence using \eqref{eq:interpolationX} and \eqref{eq:stability} we can compute
\begin{align*}
\norm{\fdag}{s}^2 - \norm{\fad}{s}^2 + \norm{\fad-\fdag}{s}^2 &\leq 2\norm{\fdag}{u} \norm{\fad - \fdag}{2s-u}\\
& \leq 2 \norm{\fdag}{u} \norm{\fad - \fdag}{-a}^{\frac{u-s}{s+a}} \norm{\fad - \fdag}{s}^{\frac{2s+a-u}{s+a}}\\
& \leq 2R \norm{\fdag}{u}  \norm{\fad - \fdag}{s}^{\frac{2s+a-u}{s+a}} \varphi\left(\norm{F\left(\fad\right) - \gdag}{\Y}\right)^{\frac{u-s}{s+a}}.
\end{align*}
Next we apply Young's inequality (this is just \eqref{eq:fenchel_young} with $h(a) = a^p/p$ and $h^*(b) = b^q/q$) in the form
\begin{equation}\label{eq:young}
ab = \left(\varepsilon a\right) \left(\frac{b}{\varepsilon}\right) \leq \frac{\varepsilon^p}{p} a^p + \frac{1}{q \varepsilon^q} b^q
\end{equation}
with $\varepsilon = \left(p/(4R\norm{\fdag}{u})\right)^{1/p}$, $p = \frac{2s+2a}{2s+a-u}$ and $q = \frac{2s+2a}{a+u}$. This yields
\begin{align*}
\norm{\fdag}{s}^2 - \norm{\fad}{s}^2 + \norm{\fad-\fdag}{s}^2 \leq \frac{1}{2} \norm{\fad - \fdag}{s}^2 + C \varphi\left(\norm{F\left(\fad\right) - \gdag}{\Y}\right)^{\frac{2u-2s}{a+u}}
\end{align*}
with a constant $C = C \left(R, \norm{\fdag}{u}, u,s,a\right)$. Thus we have
\begin{equation}\label{eq:aux1}
\norm{\fdag}{s}^2 - \norm{\fad}{s}^2 + \frac12 \norm{\fad-\fdag}{s}^2 \leq C \varphi\left(\norm{F\left(\fad\right) - \gdag}{\Y}\right)^{\frac{2u-2s}{a+u}}.
\end{equation}
It follows from the minimizing property of $\fad$ in \eqref{eq:tik}, that
\[
\frac12 \norm{F\left(\fad\right) - \gobs}{\Y}^2 + \alpha \norm{\fad}{s}^2  \leq \frac12 \norm{F\left(\fdag\right) - \gobs}{\Y}^2 + \alpha \norm{\fdag}{s}^2  \leq \frac{\delta^2}{2} + \alpha \norm{\fdag}{s}^2
\]
where we used \eqref{eq:noiselevel}. Due to the triangle inequality and $\left(a+b\right)^2 \leq 2a^2 + 2b^2$ it holds
\[
\norm{F\left(\fad\right) - \gdag}{\Y}^2 \leq 2\norm{F\left(\fad\right) - \gobs}{\Y}^2  + 2\delta^2
\]
which hence implies
\[
\frac14 \norm{F\left(\fad\right) - \gdag}{\Y}^2 - \frac{\delta^2}{2} + \alpha \norm{\fad}{s}^2  \leq \frac{\delta^2}{2} + \alpha \norm{\fdag}{s}^2
\]
Some rearranging yields
\begin{equation}\label{eq:aux2}
\frac18\norm{F\left(\fad\right) - \gdag}{\Y}^2 \leq \delta^2 + \alpha \left(\norm{\fdag}{s}^2 -  \norm{\fad}{s}^2\right) - \frac18 \norm{F\left(\fad\right) - \gdag}{\Y}^2.
\end{equation}
Combining \eqref{eq:aux1} and \eqref{eq:aux2} gives
	\begin{align}
	\frac18\norm{F\left(\fad\right) - \gdag}{\Y}^2  + \frac{\alpha}{2} \norm{\fad-\fdag}{s}^2 & \leq \delta^2 + \alpha \left(\norm{\fdag}{s}^2 -  \norm{\fad}{s}^2 + \frac12\norm{\fad-\fdag}{s}^2 \right)- \frac18 \norm{F\left(\fad\right) - \gdag}{\Y}^2\nonumber\\
	& \leq \delta^2 + C\alpha \varphi\left(\norm{F\left(\fad\right) - \gdag}{\Y}\right)^{\frac{2u-2s}{a+u}} - \frac18 \norm{F\left(\fad\right) - \gdag}{\Y}^2 \nonumber \\
	& \leq \delta^2  + C\alpha\sup_{\tau \geq 0} \left[\varphi\left(\tau\right)^{\frac{2u-2s}{a+u}} - \frac{1}{8C\alpha} \tau^2\right] \nonumber\\
	& \leq \delta^2 +C\alpha \left(-\psi_{u,s,a}\right)^*\left(-\frac{1}{8C\alpha}\right). \label{eq:error_decomposition_det}
	\end{align}
The claim follows by from dividing by $\alpha$.
\end{proof}

\begin{rem}
Note that the assumption in Theorem \ref{thm:deterministc} that for sufficiently small noise levels $0<\delta \le \bar \delta$ the regularized solutions $\fad$ belong to the set $Q$ of admissible elements of the conditional stability estimate \eqref{eq:stability} is satisfied if the choice of the regularization parameter satisfies the condition \eqref{eq:conv} and if the set $Q$ is the intersection of a finite number of closed intersected balls $D_\rho^\nu(\fdag)$ with $0 \le \nu \le s$.
\end{rem}

Before we conclude with convergence rates under a priori and a posteriori parameter choice rules, let us collect some facts about the approximation error in \eqref{eq:errorest1}:
\begin{rem}\label{rem:app_err}
Let
\[
\varphi_{\mathrm{app}} \left(\alpha\right) := \left(-\psi_{u,s,a}\right)^*\left(-\frac{1}{\alpha}\right) =  \sup_{\tau \geq 0}\left[\psi_{u,s,a}\left(\tau\right) - \frac{\tau}{\alpha}\right], \qquad \alpha > 0.
\]
\begin{enumerate}
\item[(a)] As $\psi_{u,s,a} \left(0\right) = 0$ we obtain $\varphi_{\mathrm{app}} \left(\alpha\right) \geq 0$ for all $\alpha > 0$.
\item[(b)] As $\psi_{u,s,a}$ and $\alpha \mapsto -\frac{\tau}{\alpha}$ for fixed $\tau>0$ are monotonically increasing, we also find that $\varphi_{\mathrm{app}}$ is monotonically increasing.
\item[(c)] The concavity of $\varphi$ together with $\varphi\left(0\right) = 0$ implies that
\begin{equation}\label{eq:concavity_psiusa}
\psi_{u,s,a}\left(C\tau\right) = \varphi\left(\sqrt{C}\sqrt{\tau}\right)^{\frac{2\left(u-s\right)}{a+u}} \leq \left(\sqrt{C} \varphi\left(\sqrt{\tau}\right)\right)^{\frac{2\left(u-s\right)}{a+u}} = C^{\frac{u-s}{a+u}} \psi_{u,s,a}\left(\tau\right)
\end{equation}
for any $C>1, \tau>0$. Thus it holds
\begin{align*}
\varphi_{\mathrm{app}} \left(C\alpha\right) & =  \sup_{\tau \geq 0}\left[\psi_{u,s,a}\left(\tau\right) - \frac{\tau}{C\alpha}\right]\\
& = \sup_{\tau' \geq 0}\left[\psi_{u,s,a}\left(C^{\frac{a+u}{a+s}}\tau'\right) - C^{\frac{u-s}{a+s}}\frac{\tau'}{\alpha}\right]\\
& \leq \sup_{\tau' \geq 0}\left[C^{\frac{u-s}{a+s}}\psi_{u,s,a}\left(\tau'\right) - C^{\frac{u-s}{a+s}}\frac{\tau'}{\alpha}\right]\\
& = C^{\frac{u-s}{a+s}} \varphi_{\mathrm{app}} \left(\alpha\right)
\end{align*}
for $C > 1$, i.e. we have
\begin{equation}\label{eq:constant_out}
\varphi_{\mathrm{app}}\left(C\alpha\right) \leq \max\left\{1, C^{\frac{u-s}{a+s}}\right\}\varphi_{\mathrm{app}}\left(\alpha\right)
\end{equation}
for all $\alpha, C>0$.
\item[(d)] Fix $\alpha>0$. By the equality condition in the Fenchel-Young inequality \eqref{eq:fenchel_young} it holds
\begin{equation}\label{eq:young_equality}
\varphi_{\mathrm{app}} \left(\alpha\right) = \psi_{u,s,a} \left(\tau\left(\alpha\right)\right) - \frac{\tau\left(\alpha\right)}{\alpha}
\end{equation}
for any choice $\tau\left(\alpha\right) \in \partial \left(- \psi_{u,s,a} \right)^* \left(-\frac{1}{\alpha}\right)$. Employing \eqref{eq:concavity_psiusa} we find
\[
0 \leq \varphi_{\mathrm{app}} \left(\alpha\right) = \psi_{u,s,a} \left(\tau\left(\alpha\right)\right) - \frac{\tau\left(\alpha\right)}{\alpha} \leq \max\left\{1, \left(\tau\left(\alpha\right)\right)^{\frac{u-s}{a+u}}\right\} \psi_{u,s,a} \left(1\right) - \frac{\tau\left(\alpha\right)}{\alpha},
\]
which implies
\[
\frac{\tau\left(\alpha\right)}{\max\left\{1, \left(\tau\left(\alpha\right)\right)^{\frac{u-s}{a+u}}\right\}} \leq \psi_{u,s,a} \left(1\right)\alpha.
\]
As $u-s < a+u$ this yields $\tau\left(\alpha\right) \to 0$ as $\alpha \to 0$ and furthermore by \eqref{eq:young_equality} that $\varphi_{\mathrm{app}} \left(\alpha\right) \to 0$ as $\alpha \to 0$.
\end{enumerate}
\end{rem}

\begin{cor}\label{cor:rates}
Let the assumptions of Theorem \ref{thm:deterministc} hold true, suppose that $\psi_{u,s,a}$ is concave, and let $\alpha = \alpha_*$ be chosen such that
\begin{equation}\label{eq:apriori_choice}
-\frac{1}{\alpha^*} \in \partial \left(-\psi_{u,s,a}\right) \left(\delta^2\right).
\end{equation}
Then we obtain the convergence rate
\begin{equation} \label{eq:convrate1}
\|f_{\alpha_*}^\delta-\fdag\|_s = \mathcal O \left(\sqrt{\psi_{u,s,a}\left(\delta^2\right)}\right) = \mathcal O \left( \left(\varphi\left(\delta\right)\right)^{\frac{u-s}{a+u}}\right)\qquad\text{as}\qquad\delta \to 0.
\end{equation}
\end{cor}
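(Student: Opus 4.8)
The plan is to feed the a priori choice \eqref{eq:apriori_choice} into the error estimate \eqref{eq:errorest1} of Theorem~\ref{thm:deterministc} and to simplify using the properties of the approximation error collected in Remark~\ref{rem:app_err}. The key observation, which unlocks everything, is that with the convex function $h:=-\psi_{u,s,a}$ (convex precisely because $\psi_{u,s,a}$ is assumed concave) the rule \eqref{eq:apriori_choice} reads $-1/\alpha_* \in \partial h(\delta^2)$, which is exactly the equality condition in the Fenchel--Young inequality \eqref{eq:fenchel_young} applied to the numbers $\delta^2$ and $-1/\alpha_*$. Recalling $\varphi_{\mathrm{app}}(\alpha)=h^*(-1/\alpha)$ from Remark~\ref{rem:app_err}, this equality gives the exact identity
\[
\varphi_{\mathrm{app}}\left(\alpha_*\right) \;=\; \psi_{u,s,a}\left(\delta^2\right)-\frac{\delta^2}{\alpha_*}.
\]

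From here the computation is short. First, Remark~\ref{rem:app_err}(a) ($\varphi_{\mathrm{app}}\ge 0$) turns the identity into the two bounds $\delta^2/\alpha_* \le \psi_{u,s,a}(\delta^2)$ and $\varphi_{\mathrm{app}}(\alpha_*)\le\psi_{u,s,a}(\delta^2)$. Since \eqref{eq:errorest1} contains $C\,\varphi_{\mathrm{app}}(8C\alpha_*)$ rather than $\varphi_{\mathrm{app}}(\alpha_*)$, I would pull the constant $8C$ out via \eqref{eq:constant_out}, obtaining $\varphi_{\mathrm{app}}(8C\alpha_*)\le\max\{1,(8C)^{(u-s)/(a+s)}\}\,\varphi_{\mathrm{app}}(\alpha_*)\le\max\{1,(8C)^{(u-s)/(a+s)}\}\,\psi_{u,s,a}(\delta^2)$. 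Inserting both bounds into \eqref{eq:errorest1} with $\alpha=\alpha_*$ yields
\[
\|f_{\alpha_*}^\delta-\fdag\|_s^2 \;\le\; \frac{\delta^2}{\alpha_*}+C\,\varphi_{\mathrm{app}}(8C\alpha_*) \;\le\; \Bigl(1+C\max\{1,(8C)^{(u-s)/(a+s)}\}\Bigr)\,\psi_{u,s,a}\left(\delta^2\right),
\]
so that $\|f_{\alpha_*}^\delta-\fdag\|_s=\mathcal O\bigl(\sqrt{\psi_{u,s,a}(\delta^2)}\bigr)$. The second form of the rate in \eqref{eq:convrate1} is then purely cosmetic: by the very definition of $\psi_{u,s,a}$ one has $\sqrt{\psi_{u,s,a}(\delta^2)}=\bigl((\varphi(\delta))^{2(u-s)/(a+u)}\bigr)^{1/2}=(\varphi(\delta))^{(u-s)/(a+u)}$.

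The one point that needs a little care — and which I regard as the main obstacle — is checking that the choice $\alpha_*$ from \eqref{eq:apriori_choice} is actually admissible in the sense of Theorem~\ref{thm:deterministc}, i.e.\ that $f_{\alpha_*}^\delta$ lands in $Q$ for small $\delta$; by the remark following that theorem it suffices to verify the limit conditions \eqref{eq:conv}. The bound $\delta^2/\alpha_* \le \psi_{u,s,a}(\delta^2)$ already gives $\delta^2/\alpha_* \to 0$ since $\psi_{u,s,a}$ is an index function. For $\alpha_*\to 0$ I would use that $1/\alpha_*$ is a supergradient of the concave function $\psi_{u,s,a}$ at $\delta^2$ together with the fact that the exponent $2(u-s)/(a+u)$ lies strictly between $0$ and $2$ (the upper bound being equivalent to $a+s>0$ from Assumption~\ref{ass:basic1}(d)); this forces the right-hand derivative of $\psi_{u,s,a}$ at the origin to be infinite, hence the supergradients $1/\alpha_*$ at $\delta^2$ diverge as $\delta\to0$. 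Everything else in the argument is bookkeeping with the facts already assembled in Remark~\ref{rem:app_err}.
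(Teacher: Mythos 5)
Your proposal is correct and follows essentially the same route as the paper's proof: both hinge on reading the choice \eqref{eq:apriori_choice} as the equality case of the Fenchel--Young inequality, which yields the identity $\varphi_{\mathrm{app}}(\alpha_*)=\psi_{u,s,a}(\delta^2)-\delta^2/\alpha_*$, combined with Remark~\ref{rem:app_err} to absorb the factor $8C$ and with the same two observations ($\delta^2/\alpha_*\le\psi_{u,s,a}(\delta^2)$ and divergence of the supergradients of the concave $\psi_{u,s,a}$ near the origin) to verify \eqref{eq:conv}. The only cosmetic difference is that you plug $\alpha_*$ in directly instead of first computing the infimum over $\alpha$ via biconjugation, and you justify $\alpha_*\to 0$ with an explicit exponent argument where the paper cites \cite[Rem.~3.31]{Werner12}; both are sound.
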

\begin{proof}
Due to Remark \ref{rem:app_err}(c) we can simplify the error estimate \eqref{eq:errorest1} to
\begin{equation}\label{eq:aux20}
\norm{\fad - \fdag}{s}^2 \leq C' \left(\frac{\delta^2}{\alpha} + \left(- \psi_{u,s,a}\right)^* \left(-\frac{1}{\alpha}\right)\right)
\end{equation}
with $C' = \max\left\{C, C\left(8C\right)^{\frac{u-s}{a+s}}\right\}$ and $C$ as in Theorem \ref{thm:deterministc}. Note that the infimum over $\alpha >0$ of the right-hand side of \eqref{eq:aux20} can be computed as
\begin{align*}
\inf_{\alpha>0} \left[\frac{\delta^2}{\alpha} + \left(- \psi_{u,s,a}\right)^* \left(-\frac{1}{\alpha}\right)\right]& = - \sup_{\alpha>0} \left[-\frac{\delta^2}{\alpha} - \left(- \psi_{u,s,a}\right)^* \left(-\frac{1}{\alpha}\right)\right]\\
& =  - \sup_{\tau<0} \left[\delta^2 \tau - \left(- \psi_{u,s,a}\right)^* \left(\tau\right)\right]\\
& = - \left(- \psi_{u,s,a}\right)^{**} \left(\delta^2\right).
\end{align*}
By concavity of $\psi_{u,s,a}$, the last expression equals $\psi_{u,s,a}\left(\delta^2\right)$. Furthermore choosing $\alpha = \alpha^*$ such that the infimum is attained at $\alpha^*$ corresponds to equality in the Fenchel-Young inequality
\[
-\frac{\delta^2}{\alpha} \leq \left(- \psi_{u,s,a}\right)^* \left(-\frac{1}{\alpha}\right) + \left(- \psi_{u,s,a}\right)^{**} \left(\delta^2\right),
\]
which is attained if and only if $-\frac{1}{\alpha^*} \in \partial \left(-\psi_{u,s,a}\right) \left(\delta^2\right)$. It remains to show that $\alpha_*$ as in \eqref{eq:apriori_choice} satisfies \eqref{eq:conv}. By the equality condition in the Fenchel-Young inequality \eqref{eq:fenchel_young} it holds
\[
\left(-\psi_{u,s,a}\right)^* \left(-\frac{1}{\alpha^*}\right) = \psi_{u,s,a} \left(\delta^2\right) - \frac{\delta^2}{\alpha^*}.
\]
As the left-hand side is $\geq 0$, this implies immediately $\frac{\delta^2}{\alpha^*} \leq \psi_{u,s,a} \left(\delta^2\right) \to 0$ as $\delta\to 0$.

For any convex function on $\left[0,\infty\right)$, the subdifferential can be represented as an interval with borders given by left- and right-hand sided derivatives. Thus the concavity of $\psi_{u,s,a}$ implies
\[
-\partial \left(-\psi_{u,s,a}\right)  \left(\delta^2\right) = \left[\sup_{t \in \left(\delta^2, \infty\right)} \frac{\psi_{u,s,a} \left(t\right) - \psi_{u,s,a}\left(\delta^2\right)}{t-\delta^2}, \inf_{t \in \left[0,\delta^2\right)} \frac{\psi_{u,s,a}\left(\delta^2\right)  - \psi_{u,s,a}\left(t\right)}{\delta^2-t} \right]
\]
As the supremum tends to $\infty$ as $\delta$ tends to $0$ (c.f. \cite[Rem 3.31]{Werner12}), this also proves $\alpha \to 0$ as $\delta \to 0$.
\end{proof}

\begin{rem}
\begin{enumerate}
\item[(a)] The additional assumption that $\psi_{u,s,a}$ itself is also concave in Corollary \ref{cor:rates} seems rather mild. In case of a Hölder-type function $\varphi$, this follows immediately from concavity of $\varphi$ itself, see Example \ref{ex:Hoelder} below. Similarly, if $\varphi$ is of logarithmic type as in Example \ref{ex:stab}, then concavity of $\psi_{u,s,a}$ is also evident.
\item[(b)] We will give another possible expression for an a priori parameter choice rule avoiding convex analysis in Corollary \ref{cor:stoch}.
\end{enumerate}
\end{rem}

Let us now turn to an a posteriori parameter choice rule. Given a set of candidate parameters  $\alpha_1 = \delta^2$, $\alpha_j = \alpha_1 r^{2j-2}$ with some $r>1$ for $j = 2,...,m$ where $m$ is the first value such that $\alpha_m \geq 1$, we define
\begin{equation}\label{eq:lepskij}
j_{\mathrm{Lep}} = \max\left\{1 \leq j \leq m ~\big|~ \norm{\hat u_{\alpha_i} -  \hat u_{\alpha_j}}{s} \leq 4 r^{1-i} \quad\text{for all}\quad i \leq j \right\},
\end{equation}
i.e. $\alpha_{\mathrm{Lep}} := \alpha_{j_{\mathrm{Lep}}}$ is chosen according to the Lepski{\u\i}-type balancing principle. This gives the following result:
\begin{cor}\label{cor:apost}
Let the assumptions of Theorem \ref{thm:deterministc} hold true and choose $\alpha = \alpha_{\mathrm{Lep}}$ according to \eqref{eq:lepskij}. Suppose further that $f_{\alpha}^\delta \in Q$ for all $\alpha$ in the previously described candidate set and any sufficiently small $\delta$ with the set of admissible elements $Q$ for \eqref{eq:stability}. Then we obtain the a posteriori convergence rate
\begin{equation} \label{eq:convrate2}
\|f_{\alpha_{\mathrm{Lep}}}^\delta-\fdag\|_s = \mathcal O \left(\sqrt{\psi_{u,s,a}\left(\delta^2\right)}\right) = \mathcal O \left( \left(\varphi\left(\delta\right)\right)^{\frac{u-s}{a+u}}\right)\qquad\text{as}\qquad\delta \to 0.
\end{equation}
\end{cor}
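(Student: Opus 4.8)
The plan is to run the standard Lepski{\u\i}/balancing argument on top of the error estimate~\eqref{eq:errorest1} of Theorem~\ref{thm:deterministc}. Writing $\varphi_{\mathrm{app}}$ as in Remark~\ref{rem:app_err}, so that $(-\psi_{u,s,a})^*\!\left(-\tfrac{1}{8C\alpha}\right)=\varphi_{\mathrm{app}}(8C\alpha)$, and using $\delta^2/\alpha_j=r^{2(1-j)}$ together with $\sqrt{x+y}\le\sqrt x+\sqrt y$, estimate~\eqref{eq:errorest1} gives for every candidate $\alpha_j$ (recall that $f_{\alpha_j}^\delta\in Q$ along the whole grid is assumed, and read $\hat u_{\alpha}$ in~\eqref{eq:lepskij} as $f_{\alpha}^\delta$)
\[
\|f_{\alpha_j}^\delta-\fdag\|_s\le r^{1-j}+\sqrt{C\,\varphi_{\mathrm{app}}(8C\alpha_j)},
\]
i.e.\ the error at $\alpha_j$ splits into the \emph{known, $j$-decreasing} part $\Phi(\alpha_j):=r^{1-j}$ and the \emph{unknown, $j$-increasing} part $\Psi(\alpha_j):=\sqrt{C\,\varphi_{\mathrm{app}}(8C\alpha_j)}$ (monotonicity of $\varphi_{\mathrm{app}}$, Remark~\ref{rem:app_err}(b)). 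As a reference I keep the a priori optimal parameter $\alpha_*$ of Corollary~\ref{cor:rates}, which (using concavity of $\psi_{u,s,a}$ as there) satisfies $\delta^2/\alpha_*\le\psi_{u,s,a}(\delta^2)$, $\varphi_{\mathrm{app}}(\alpha_*)\le\psi_{u,s,a}(\delta^2)$, and $\alpha_*\to0$, $\delta^2/\alpha_*\to0$ as $\delta\to0$; hence $\alpha_*\in[\alpha_1,\alpha_m]$ for all small $\delta$, so the grid contains some $\alpha_{j_\#}$ with $\alpha_*/r^2<\alpha_{j_\#}\le\alpha_*$.

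I would then introduce the balancing index $j_*:=\max\{1\le j\le m\mid\Psi(\alpha_j)\le\Phi(\alpha_j)\}$; by monotonicity its defining set is an initial segment $\{1,\dots,j_*\}$, and since $\Psi(\alpha_1)=\sqrt{C\,\varphi_{\mathrm{app}}(8C\delta^2)}\to0$ (Remark~\ref{rem:app_err}(d)) while $\Phi(\alpha_1)=1$, and $\Psi(\alpha_m)\ge\sqrt{C\,\varphi_{\mathrm{app}}(8C)}>0$ while $\Phi(\alpha_m)^2=\delta^2/\alpha_m\le\delta^2\to0$, one has $1\le j_*<m$ for all sufficiently small $\delta$. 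Step~(i): $j_{\mathrm{Lep}}\ge j_*$. For $i\le j\le j_*$ the triangle inequality together with $\|f_{\alpha_k}^\delta-\fdag\|_s\le\Phi(\alpha_k)+\Psi(\alpha_k)\le2\Phi(\alpha_k)$ (valid for $k\le j_*$, where $\Psi(\alpha_k)\le\Phi(\alpha_k)$) and $\Phi(\alpha_j)\le\Phi(\alpha_i)$ give $\|f_{\alpha_i}^\delta-f_{\alpha_j}^\delta\|_s\le4\Phi(\alpha_i)=4r^{1-i}$, which is precisely the condition in~\eqref{eq:lepskij}; hence that condition holds up to $j_*$, so $j_{\mathrm{Lep}}\ge j_*$. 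Step~(ii): as $j_*\le j_{\mathrm{Lep}}$, applying the defining inequality of $j_{\mathrm{Lep}}$ with $i=j_*$ gives $\|f_{\alpha_{j_*}}^\delta-f_{\alpha_{\mathrm{Lep}}}^\delta\|_s\le4r^{1-j_*}$, and together with $\|f_{\alpha_{j_*}}^\delta-\fdag\|_s\le2r^{1-j_*}$ this yields $\|f_{\alpha_{\mathrm{Lep}}}^\delta-\fdag\|_s\le6\,r^{1-j_*}=6\sqrt{\delta^2/\alpha_{j_*}}$.

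It remains to bound $\delta^2/\alpha_{j_*}$ by a constant times $\psi_{u,s,a}(\delta^2)$. On the one hand, minimising over the geometric grid, $\min_{1\le j\le m}\bigl[\Phi(\alpha_j)^2+\Psi(\alpha_j)^2\bigr]\ge r^{-2}\,\delta^2/\alpha_{j_*}$: for $j\le j_*$ the left-hand bracket is $\ge\Phi(\alpha_j)^2=\delta^2/\alpha_j\ge\delta^2/\alpha_{j_*}$, and for $j\ge j_*+1$ it is $\ge\Psi(\alpha_j)^2\ge\Psi(\alpha_{j_*+1})^2>\Phi(\alpha_{j_*+1})^2=\delta^2/\alpha_{j_*+1}=r^{-2}\,\delta^2/\alpha_{j_*}$, the strict inequality being exactly the failure of the balancing condition at $j_*+1$ (this is where $j_*<m$ enters). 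On the other hand, evaluating the same minimum at $\alpha_{j_\#}$ and using monotonicity of $\varphi_{\mathrm{app}}$, the reference bounds for $\alpha_*$, and the ``constant-out'' estimate~\eqref{eq:constant_out} to absorb the factor $8C$, one gets $\Phi(\alpha_{j_\#})^2+\Psi(\alpha_{j_\#})^2<r^2\,\delta^2/\alpha_*+C\,\varphi_{\mathrm{app}}(8C\alpha_*)\lesssim\psi_{u,s,a}(\delta^2)$. Chaining the two bounds gives $\delta^2/\alpha_{j_*}\lesssim\psi_{u,s,a}(\delta^2)$, and with Step~(ii) we conclude $\|f_{\alpha_{\mathrm{Lep}}}^\delta-\fdag\|_s=\mathcal O\bigl(\sqrt{\psi_{u,s,a}(\delta^2)}\bigr)=\mathcal O\bigl((\varphi(\delta))^{(u-s)/(a+u)}\bigr)$. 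I expect the main obstacle to be purely organisational rather than deep: choosing $\bar\delta$ small enough that $1\le j_*<m$ and $\alpha_*\in[\alpha_1,\alpha_m]$ hold simultaneously (this is the role of ``sufficiently small $\delta$''), and keeping track of which candidate index is compared to which; the analytic content — the error estimate~\eqref{eq:errorest1}, the monotonicity and constant-out property of $\varphi_{\mathrm{app}}$, and the reference bounds for $\alpha_*$ — is already supplied by Theorem~\ref{thm:deterministc}, Corollary~\ref{cor:rates} and Remark~\ref{rem:app_err}.
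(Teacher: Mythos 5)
Your argument is correct, and it follows the same basic strategy as the paper: start from the error estimate \eqref{eq:errorest1}, split $\|f_{\alpha_j}^\delta-\fdag\|_s$ into the known variance part $\delta/\sqrt{\alpha_j}=r^{1-j}$ and the unknown, monotone approximation part $\sqrt{C\varphi_{\mathrm{app}}(8C\alpha_j)}$ (using \eqref{eq:constant_out} to absorb the factor $8C$), and then run the Lepski{\u\i} balancing principle over the geometric grid. The difference is one of packaging: the paper invokes two external results --- the oracle inequality of M\-ath\'e (\cite[Cor.~1]{Mathe06}), which gives $\|f^\delta_{\alpha_{\mathrm{Lep}}}-\fdag\|_s\le 3r\min_j[\Phi(j)+\Psi(j)]$, and then an argument as in \cite[Lemma~3.42]{Werner12} to replace the minimum over the grid by the infimum over all $\alpha>0$, which is identified with $\psi_{u,s,a}(\delta^2)$ as in Corollary~\ref{cor:rates} --- whereas you reprove both ingredients inline: your Steps~(i)--(ii) with the balancing index $j_*$ are exactly the standard proof of the oracle inequality (with matching constants, since the threshold $4r^{1-i}$ is four times your $\Phi(\alpha_i)$), and instead of the min-versus-inf convex-analysis step you compare the grid minimum directly with the value at the grid point $\alpha_{j_\#}$ adjacent to the a~priori parameter $\alpha_*$, using the Fenchel--Young equality bounds $\delta^2/\alpha_*\le\psi_{u,s,a}(\delta^2)$ and $\varphi_{\mathrm{app}}(\alpha_*)\le\psi_{u,s,a}(\delta^2)$. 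This makes the proof self-contained and keeps all constants explicit, at the price of being longer; note also that, exactly as in the paper's own route, your reference bounds rely on the concavity of $\psi_{u,s,a}$ (through Corollary~\ref{cor:rates}), an assumption the corollary's statement does not repeat explicitly but which is needed in either version to identify the benchmark with $\psi_{u,s,a}(\delta^2)$.
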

\begin{proof}
Note that \eqref{eq:errorest1} together with \eqref{eq:constant_out} yields an error decomposition of the form
\begin{equation}\label{eq:error_decomposition_det2}
\norm{\fad-\fdag}{s} \leq \frac{\delta}{\sqrt{\alpha}} + C\sqrt{\varphi_{\mathrm{app}} \left(\alpha\right)}
\end{equation}
with some constant $C>0$. For our set of parameter candidates this gives yields
\[
\norm{\hat u_{\alpha_j}-\fdag}{s} \leq \frac12 \left(\Phi\left(j\right) + \Psi\left(j\right)\right), \qquad 1\leq j \leq m
\]
with $\Psi\left(j\right) = (2\delta)/\sqrt{\alpha_j} = 2 r^{1-j}$ and $\Phi\left(j\right) = C\sqrt{\varphi_{\mathrm{app}} \left(\alpha_j\right)}$. By construction, $\Psi$ is non-increasing, $\Phi$ is non-decreasing, and $\Phi\left(1\right) \leq \Psi\left(1\right) = 2$ if $\delta$ is sufficiently small. 
Furthermore $\Psi\left(i\right) \leq r \Psi\left(i+1\right)$, and hence it follows from \cite[Cor. 1]{Mathe06} that
\begin{align*}
\norm{\hat u_{\alpha_{j_{\mathrm{Lep}}}}-\fdag}{s} &\leq 3 r \min_{1 \leq j \leq m} \left[\Phi\left(j\right) + \Psi\left(j\right)\right]\\
& = 3 r C\min_{1 \leq j \leq m} \left[\frac{\delta}{2 \sqrt{\alpha_j}} + \sqrt{\left(-\psi_{u,s,a}\right)^*\left(-\frac{1}{\alpha_j}\right)}\right].
\end{align*}
By some elementary convex analysis we conclude as in \cite[Lemma 3.42]{Werner12}, exploiting \eqref{eq:constant_out}, that the minimum on the right-hand side can be replaced by the infimum over all $\alpha$, provided that $\delta$ is sufficiently small (cf. also \cite{WernerHohage12}).
\end{proof}

\begin{ex}[H\"older type conditional stability] \label{ex:Hoelder}
Let us consider the H\"older special case $\varphi\left(t\right) = t^\gamma$ of the conditional stability estimate (\ref{eq:stability})  with exponents $0<\gamma \le 1$, which has recently been studied in a slightly
modified form in~\cite{EggerHof18}. Here we obtain
\[
\psi_{u,s,a}\left(t\right) = t^{\frac{\gamma\left(u-s\right)}{a+u}}
\]
and hence for $q := \frac{\gamma\left(u-s\right)}{a+u}$ that
\[
\left(-\psi_{u,s,a}^*\right) \left(-v\right) = \sup_{t \geq 0} \left[t^q - tv\right] \sim v^{\frac{q}{q-1}},
\]
because it can be seen via differentiation that the supremum is attained for $t = \left(\frac{v}{q}\right)^{\frac{1}{q-1}}$.
Thus $\varphi_{\mathrm{app}} \left(\alpha\right) \sim \alpha^{\frac{q}{1-q}}$ and
\[
\alpha\left(-\psi_{u,s,a}^*\right) \left(-\frac{1}{8\alpha}\right) \sim \alpha^{\frac{q}{1-q} + 1} \sim \alpha^{\frac{1}{1-q}} \sim \alpha^{\frac{a+u}{a+u-\gamma\left(u-s\right)}}.
\]
This term coincides with the corresponding error term in \cite[Lemma~3.3]{EggerHof18}. Hence, the convergence rate from (\ref{eq:convrate1}) attains in this example the form
\begin{equation} \label{eq:rateex}
\|f^\delta_{\alpha_*}-\fdag\|_{s}  = \mathcal O \left(\delta^{\gamma \frac{u-s}{a+u}}\right)\qquad\text{as}\qquad\delta \to 0,
\end{equation}
which again coincides with the rate results of Theorems~2.1 and 2.2 in~\cite{EggerHof18}. Note that in case of a linear forward operator, these rates are known to be order optimal as also discussed in~\cite{EggerHof18}. The a priori choice \eqref{eq:apriori_choice} for the regularization parameter leads for the H\"older type conditional stability to
\begin{equation} \label{eq:apriex}
\alpha_*=\alpha_*(\delta) \sim \delta^{2-2\gamma\frac{u-s}{a+u}}.
\end{equation}
\end{ex}

\begin{rem} \label{rem:border}
The case of H\"older type conditional stability considered in Example~\ref{ex:Hoelder} allows us to discuss briefly the borderline situation $u=s$. Evidently, then the a priori parameter choice \eqref{eq:apriex}
attains the form $\alpha_*=\alpha_*(\delta) \sim \delta^2$, which is in a general Hilbert space setting well-known from \cite{ChengYamamoto00} as an appropriate choice for conditional stability estimates of a form like in Example~\ref{ex:Q3} below. However, in our setting $\fdag \in \X_s$ with this parameter choice formula \eqref{eq:rateex} cannot serve as a convergence rate result, because the exponent of $\delta$ is not positive. Moreover Proposition~\ref{pro:convergence} does not apply, since $\delta^2/\alpha_* \to 0$ as $\delta \to 0$ fails. Hence, one cannot even show at all convergence $\|f^\delta_{\alpha_*}-\fdag\|_{s} \to 0$ as $\delta \to 0$ and if the set $Q$ in (\ref{eq:stability}) restricts the applicability of the conditional stability estimate to balls around $\fdag$, then $u=s$ is in contrast to $u>s$ does not ensure that
$f^\delta_{\alpha_*} \in Q$.  Asking for reasons why \cite{ChengYamamoto00} recommends
$\alpha_*=\alpha_*(\delta) \sim \delta^2$ nevertheless also for the borderline situation $u=s$ of conditional stability, we see that Cheng and Yamamoto in \cite{ChengYamamoto00} use for finding approximate solutions the minimization problem
$$\falhat \in \argmin\limits_{f\in D(F)\cap Q} \left[ \frac12 \norm{F\left(f\right) - \gobs}{\Y}^2 + \alpha \norm{f}{s}^2 \right]$$
instead of \eqref{eq:tik}, which needs to know the set $Q$. Then one can show at least a convergence rate result in the $\X$-norm of the form
\begin{equation} \label{eq:ratezero}
\|f^\delta_{\alpha_*}-\fdag\|  = \mathcal O \left(\delta^{\gamma \frac{u}{a+u}}\right)\qquad\text{as}\qquad\delta \to 0.
\end{equation}
For $\gamma=1$ such rate result \eqref{eq:ratezero} takes place also under somewhat stronger conditions for \linebreak `oversmoothing' penalties in the case $u<s$ with $\|\fdag\|_s=\infty$. In this context, we refer to \cite{HofMat19}, where for the a priori parameter choice \eqref{eq:apriex}, here with $\delta^2/\alpha_* \to \infty$ as $\delta \to 0$, \eqref{eq:ratezero} is proven, see also \cite{HofMat18} for the same
convergence rate result by using the discrepancy principle.

\end{rem}

\section{Statistical Inverse Problems} \label{sec:Stat}

Now we will discuss how to generalize the previous results to the stochastic data model \eqref{eq:stochastic_noise_model} with $\sigma>0$. To analyze \eqref{eq:tik_stat} we have to proceed differently and post additional assumptions:
\begin{ass}\label{ass:stoch}
Let us assume that there is a Gelfand triple $\left(\V,\Y,\V'\right)$ such that the embedding $\iota: \V \hookrightarrow \Y$ is Hilbert-Schmidt. Furthermore we suppose that $F$ satisfies the interpolation inequality
\begin{equation}\label{eq:interpolationY}
\norm{F\left(f\right) - \gdag}{\V} \leq C_\theta\left(\rho\right) \norm{F\left(f\right) - \gdag}{\Y}^\theta \norm{f - \fdag}{s}^{1-\theta}
\end{equation}
for all $f \in D^s_\rho\left(\fdag\right)$ with some constant $C_\theta \left(\rho\right)$, $\rho > 0$ and $\theta \in \left(0,1\right)$.
\end{ass}
This assumption requires some comments. First note that $\iota$ being Hilbert-Schmidt implies
\[
\mathbb E\left[\norm{Z}{\V'}^2\right] = \text{trace}\left(\iota^* \text{Cov}\left[Z\right]\iota\right) < \infty,
\]
i.e. it holds $\norm{Z}{\V'} < \infty$ a.s.

\begin{rem}
Suppose that the Gelfand triple $\left(\V,\Y,\V'\right)$ is part of a Hilbert scale $\left\{\Y_\mu\right\}_{\mu \in \R}$, i.e. there exists $t \in \R$ such that $\V = \Y_t$, $\V' = \Y_{-t}$ and $\Y = \Y_0$. Furthermore assume that $F$ is Lipschitz continuous as $F : \X_s \to \Y_r$ for some $r > t$. Then \eqref{eq:interpolationY} is satisfied.
\end{rem}
\begin{proof}
The interpolation inequality \eqref{eq:interpolationX} for the Hilbert scale $\left\{\Y_\mu\right\}_{\mu \in \R}$ yields
\[
\norm{g}{\V} \leq \norm{g}{\Y}^\theta \norm{g}{\Y_r}^{1-\theta}
\]
with $\theta = 1-t/r$. Consequently, we find
\begin{align*}
\norm{F\left(f\right) - \gdag}{\V} &\leq \norm{F\left(f\right) - \gdag}{\Y}^\theta \norm{F\left(f\right) - \gdag}{\Y_r}^{1-\theta} \\
& \leq L^{1-\theta}\norm{F\left(f\right) - \gdag}{\Y}^\theta \norm{f - \fdag}{s}^{1-\theta}
\end{align*}
with the Lipschitz constant $L$ of $F : \X_s \to \Y_r$.
\end{proof}	
\begin{ex}
The most common example for white noise $\xi$ is as follows. Let $\Y = L^2 \left(\Omega\right)$ for some Lipschitz domain $\Omega \subset \R^d$, and let $\V = H^s\left(\Omega\right)$ with some $s > \frac{d}{2}$. Then $\iota :  H^s\left(\Omega\right) \hookrightarrow L^2 \left(\Omega\right)$ is Hilbert-Schmidt and one has the interpolation inequality (cf. \eqref{eq:interpolationX})
\[
\left\Vert g\right\Vert_{H^t} \leq \left\Vert g \right\Vert_{L^2}^\theta \left\Vert g \right\Vert_{H^r}^{1-\theta}
\]
with $\theta = 1-t/r$ whenever $r > t$. Consequently, if $F : \X_s \to H^r\left(\Omega\right)$ is Lipschitz, then \eqref{eq:interpolationY} holds true.
\end{ex}

It follows similar to the deterministic case that the functional \eqref{eq:tik_stat} admits a unique minimizer for fixed data $\gobs$. If $Z$ is considered as an element of $\Y^*$, then continuous dependency of $\falhat$ on $Z$ can also be shown following the deterministic results. Convergence and convergence rates are slightly more involved, as we will see below. For the sake of presentation we restrict ourselves to a convergence rates result:

\begin{thm}\label{thm:stochastic}
Let the Assumptions \ref{ass:basic1}, \ref{ass:basic2} and \ref{ass:stoch} be satisfied, let the data $\gobs$ be given as in \eqref{eq:stochastic_noise_model}, and suppose \eqref{eq:interpolationY} holds true. If there are $\sigma_0, \delta_0>0$ and $\alpha$ is chosen such that $\falhat \in D^s_\rho\left(\fdag\right)$ for all $0 < \sigma \leq \sigma_0$, $0 < \delta \leq \delta_0$ (with $\rho$ as in Assumption \ref{ass:stoch}), then we have (surely) the error estimate
\[
\frac18 \norm{F\left(\falhat\right)-\gdag}{\Y}^2 + \frac{\alpha}{4} \norm{\falhat-\fdag}{s}^2 \leq C \left[\sigma^2 \norm{Z}{\V'}^2\alpha^{\theta-1} + \delta^2 + \alpha \left(-\psi_{u,s,a}\right)^* \left(-\frac{1}{8 C\alpha}\right)\right]
\]
for some constant $C>0$.
\end{thm}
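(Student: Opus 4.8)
The plan is to mimic the deterministic proof of Theorem~\ref{thm:deterministc} as closely as possible, replacing the estimates coming from \eqref{eq:noiselevel} by estimates handling the stochastic term $\sigma\langle Z, \cdot\rangle$. First I would note that, by assumption, $\fad,\fdag \in Q$ for the relevant $\sigma,\delta$, so the purely geometric computation in the deterministic proof leading to \eqref{eq:aux1} goes through verbatim: using the interpolation inequality \eqref{eq:interpolationX}, the conditional stability estimate \eqref{eq:stability} and Young's inequality \eqref{eq:young} one obtains
\[
\norm{\fdag}{s}^2 - \norm{\fad}{s}^2 + \frac12 \norm{\fad-\fdag}{s}^2 \leq C \varphi\left(\norm{F\left(\fad\right) - \gdag}{\Y}\right)^{\frac{2u-2s}{a+u}},
\]
with the same constant $C$ as in Theorem~\ref{thm:deterministc}. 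This is the ``smoothness/stability'' part and is insensitive to the noise model.

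Next I would exploit the minimizing property of $\fad$ in \eqref{eq:tik_stat}: comparing the value of the functional at $\fad$ and at $\fdag$ and using $\frac12\norm{g}{\Y}^2 - \langle g,\gobs\rangle = \frac12\norm{g-\gdag}{\Y}^2 - \delta\langle g,\xi\rangle - \sigma\langle Z,g\rangle - \frac12\norm{\gdag}{\Y}^2$ (valid pointwise for $g = F(\fad), F(\fdag)$, with the infinite constant $\frac12\norm{\gobs}{\Y}^2$ cancelling), one arrives at
\[
\frac12 \norm{F(\fad)-\gdag}{\Y}^2 + \alpha\norm{\fad}{s}^2 \leq \alpha\norm{\fdag}{s}^2 + \delta\langle F(\fad)-F(\fdag),\xi\rangle + \sigma\langle Z, F(\fad)-F(\fdag)\rangle.
\]
The deterministic term is handled as before: $\delta|\langle F(\fad)-F(\fdag),\xi\rangle| \leq \delta\norm{F(\fad)-\gdag}{\Y} \leq \tfrac{1}{16}\norm{F(\fad)-\gdag}{\Y}^2 + 4\delta^2$ by Cauchy--Schwarz and Young. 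The genuinely new step is the stochastic term $\sigma\langle Z, F(\fad)-\gdag\rangle$: here I would use that $Z$ extends to a bounded functional on $\V$ (since $\iota:\V\hookrightarrow\Y$ is Hilbert--Schmidt, so $\norm{Z}{\V'}<\infty$ a.s.) and estimate $\sigma|\langle Z, F(\fad)-\gdag\rangle| \leq \sigma\norm{Z}{\V'}\norm{F(\fad)-\gdag}{\V}$, then invoke Assumption~\ref{ass:stoch}'s interpolation inequality \eqref{eq:interpolationY} to get $\norm{F(\fad)-\gdag}{\V} \leq C_\theta(\rho)\norm{F(\fad)-\gdag}{\Y}^\theta\norm{\fad-\fdag}{s}^{1-\theta}$. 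A three-fold Young inequality (splitting off a power-$2/\theta$ piece of $\norm{F(\fad)-\gdag}{\Y}$, a power-$2/(1-\theta)$ piece of $\norm{\fad-\fdag}{s}$, and absorbing the $\sigma\norm{Z}{\V'}$ factor) then produces a bound of the shape $\tfrac{1}{16}\norm{F(\fad)-\gdag}{\Y}^2 + \tfrac{\alpha}{8}\norm{\fad-\fdag}{s}^2 + C\sigma^2\norm{Z}{\V'}^2\alpha^{\theta-1}$, where the $\alpha$-dependence arises precisely because the coefficient in front of $\norm{\fad-\fdag}{s}^2$ must be tuned to $\alpha$ to allow absorption into the left-hand side. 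Getting the exponent $\theta-1$ on $\alpha$ correct is the one place where care is needed; this is the main obstacle, and it is essentially bookkeeping with the conjugate exponents $2/\theta$ and $2/(1-\theta)$.

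Finally I would assemble the pieces exactly as in the deterministic proof. Rewriting the minimizing inequality as in \eqref{eq:aux2} and combining with the displayed stability estimate above (multiplied by $\alpha$), the terms $-\tfrac18\norm{F(\fad)-\gdag}{\Y}^2$ on the right absorb the $\tfrac{1}{16}$-pieces coming from the noise estimates, the $\tfrac{\alpha}{8}\norm{\fad-\fdag}{s}^2$ piece is absorbed into the $\tfrac{\alpha}{4}\norm{\fad-\fdag}{s}^2$ on the left (note the final constant is $\tfrac14$, not $\tfrac12$ as in the deterministic case, precisely because one quarter of it is eaten by the stochastic term), and the remaining $C\alpha\varphi(\norm{F(\fad)-\gdag}{\Y})^{2(u-s)/(a+u)} - \tfrac18\norm{F(\fad)-\gdag}{\Y}^2$ is bounded by $C\alpha\sup_{\tau\geq 0}[\varphi(\tau)^{2(u-s)/(a+u)} - \tfrac{1}{8C\alpha}\tau^2] = C\alpha(-\psi_{u,s,a})^*(-\tfrac{1}{8C\alpha})$ via the definition of $\psi_{u,s,a}$ and the Fenchel conjugate, just as in \eqref{eq:error_decomposition_det}. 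Collecting the leftover noise contributions $4\delta^2$ and $C\sigma^2\norm{Z}{\V'}^2\alpha^{\theta-1}$ and relabelling the constant gives exactly the asserted surely-valid estimate. The word ``surely'' is justified because $\norm{Z}{\V'}<\infty$ with probability one and no expectation is taken; the estimate holds on the full-measure event where $Z\in\V'$.
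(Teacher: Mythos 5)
Your proposal is correct and follows essentially the same route as the paper: the deterministic estimate \eqref{eq:aux1} is reused verbatim, the minimizing property of \eqref{eq:tik_stat} is rewritten via \eqref{eq:weak_noise} to expose the terms $\delta\langle F(\falhat)-\gdag,\xi\rangle$ and $\sigma\langle Z,F(\falhat)-\gdag\rangle$, the stochastic term is handled through $\norm{Z}{\V'}\norm{F(\falhat)-\gdag}{\V}$, the interpolation inequality \eqref{eq:interpolationY} and repeated Young inequalities yielding the $\sigma^2\norm{Z}{\V'}^2\alpha^{\theta-1}$ contribution, and the conclusion uses the same Fenchel-conjugate step as in \eqref{eq:error_decomposition_det}. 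Your bookkeeping of the exponent $\theta-1$ and of the absorbed constants matches the paper's argument, so no gap remains.
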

\begin{proof}
Denote again
\[
\S{g} := \frac12 \norm{g}{\Y}^2 - \left\langle g, \gobs\right\rangle \qquad\text{and}\qquad \T{g} := \frac12\norm{g-\gdag}{\Y}^2
\]
for $g \in \Y$. Due to the minimizing property of $\falhat$ in \eqref{eq:tik_stat} we have
\begin{align*}
\S{F\left(\falhat\right)} + \alpha \norm{\falhat}{s}^2  \leq  \S{F\left(\fdag\right)} + \alpha \norm{\fdag}{s}^2,
\end{align*}
which combined with \eqref{eq:aux1} implies that
\begin{align}
\frac12\T{F\left(\falhat\right)} + \frac{\alpha}{s} \norm{\falhat-\fdag}{2}^2 \leq& \frac12\T{F\left(\falhat\right)} - \left(\S{F\left(\falhat\right)} - \S{\gdag}\right)\nonumber\\ & + C \alpha \varphi\left(\norm{F\left(\falhat\right) - \gdag}{\Y}\right)^{\frac{2u-2s}{s+a}}\label{eq:aux4}.
\end{align}
By definition of $\S{\cdot}$ and $\T{\cdot}$ we obtain
\begin{align*}
&\frac12\T{F\left(\falhat\right)} - \left(\S{F\left(\falhat\right)} - \S{\gdag}\right) \\
=& \frac14 \norm{F\left(\falhat\right)-\gdag}{\Y}^2 - \left(\frac12 \norm{F\left(\falhat\right)}{\Y}^2 - \left\langle F \left(\falhat\right), \gobs\right\rangle - \frac12\norm{\gdag}{\Y}^2 + \left\langle \gdag, \gobs\right\rangle\right)\\
=& \frac14 \norm{F\left(\falhat\right)-\gdag}{\Y}^2 - \left(\frac12 \norm{F\left(\falhat\right)}{\Y}^2 - \left\langle F\left(\falhat\right),\gdag\right\rangle - \left\langle F\left(\falhat\right)-\gdag,\sigma Z + \delta \xi\right\rangle + \frac12 \norm{\gdag}{\Y}^2\right)\\
& = \frac14 \norm{F\left(\falhat\right)-\gdag}{\Y}^2 - \left(\frac12 \norm{F\left(\falhat\right)-\gdag}{\Y}^2- \left\langle F\left(\falhat\right)-\gdag,\sigma Z + \delta \xi\right\rangle\right)\\
& = - \frac14 \norm{F\left(\falhat\right)-\gdag}{\Y}^2 + \sigma \left\langle F\left(\falhat\right)-\gdag, Z\right\rangle + \delta\left\langle F\left(\falhat\right)-\gdag, \xi\right\rangle\\
& \leq - \frac14 \norm{F\left(\falhat\right)-\gdag}{\Y}^2 + \sigma \norm{F\left(\falhat\right)-\gdag}{\V}\norm{Z}{\V'} + \delta \norm{F\left(\falhat\right)-\gdag}{\Y}.
\end{align*}
For the last term on the right-hand side we use $ab \leq 2 a^2 + \frac18 b^2$, which yields the estimate
\begin{align*}
&\frac12\T{F\left(\falhat\right)} - \left(\S{F\left(\falhat\right)} - \S{\gdag}\right) \\
& \leq - \frac18 \norm{F\left(\falhat\right)-\gdag}{\Y}^2 + \sigma \norm{F\left(\falhat\right)-\gdag}{\V}\norm{Z}{\V'} + 2\delta^2.
\end{align*}
Concerning the second term on the right-hand side, using \eqref{eq:interpolationY} and applying \eqref{eq:young} appropriately twice we obtain
\begin{align*}
&\frac12\T{F\left(\falhat\right)} - \left(\S{F\left(\falhat\right)} - \S{\gdag}\right) \\
\leq& - \frac18 \norm{F\left(\falhat\right)-\gdag}{\Y}^2 + C\sigma\norm{Z}{\V'} \norm{F\left(\falhat\right) - \gdag}{\Y}^\theta \norm{\falhat - \fdag}{s}^{1-\theta}+ 2\delta^2 \\
\leq & C' \left(\sigma \norm{Z}{\V'}\norm{\falhat - \fdag}{s}^{1-\theta}\right)^{\frac{2}{2-\theta}} + 2\delta^2 \\
\leq & C'' \sigma^2 \norm{Z}{\V'}^2\alpha^{\theta-1} + \frac{\alpha}{4} \norm{\falhat - \fdag}{s}^2+ 2\delta^2
\end{align*}
with some constants $C,C',C''>0$ as $\norm{\falhat}{s}$ is bounded. Altogether this yields
\[
\frac12\T{F\left(\falhat\right)} + \frac{\alpha}{4} \norm{\falhat-\fdag}{s}^2 \leq C \left[\sigma^2 \norm{Z}{\V'}^2\alpha^{\theta-1} + \delta^2 + \alpha \varphi\left(\norm{F\left(\falhat\right) - \gdag}{\Y}\right)^{\frac{2u-2s}{s+a}}\right]
\]
with some generic constant $C>0$. Now we can proceed as in the deterministic case.
\end{proof}

\begin{cor}\label{cor:stoch}
Let the assumptions of Theorem \ref{thm:stochastic} be satisfied and recall the notation
\[
\varphi_{\mathrm{app}} \left(\alpha\right)= \left(-\psi_{u,s,a}\right)^* \left(-\frac{1}{\alpha}\right), \qquad\alpha > 0.
\]
Define
\[
\Sigma \left(\alpha\right) = \sqrt{\alpha} \sqrt{\varphi_{\mathrm{app}}\left(\alpha\right)} \qquad\text{and}\qquad \widetilde{\Sigma}\left(\alpha\right) = \alpha^{1-\frac{\theta}{2}} \sqrt{\varphi_{\mathrm{app}}\left(\alpha\right)}, \quad\alpha > 0
\]
and choose $\alpha$ such that
\begin{equation}\label{eq:apriori_choice2}
\alpha\sim \left(\Sigma^{-1} \left(\delta\right) + \widetilde{\Sigma}^{-1} \left(\sigma\right)\right)\qquad\text{as}\qquad \max\left\{\delta,\sigma\right\} \to 0.
\end{equation}
Then we obtain the a.s. convergence rate
\[
\norm{\falhat - \fdag}{s} = \mathcal O \left(\sqrt{\varphi_{\mathrm{app}}\left(\Sigma^{-1} \left(\delta\right) + \widetilde{\Sigma}^{-1} \left(\sigma\right)\right)}\right)
\]
as $\max\left\{\delta,\sigma\right\} \to 0$.
\end{cor}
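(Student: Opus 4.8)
The plan is to convert the surely--valid estimate of Theorem~\ref{thm:stochastic} into the asserted a.s.\ rate by the balancing device already used in the proof of Corollary~\ref{cor:rates}, but now with \emph{two} noise-dependent contributions. First I would divide the estimate of Theorem~\ref{thm:stochastic} by $\alpha/4$, use \eqref{eq:constant_out} to pull the factor $8C$ out of $\left(-\psi_{u,s,a}\right)^*\left(-\tfrac{1}{8C\alpha}\right)$, and take square roots via $\sqrt{a+b+c}\le\sqrt a+\sqrt b+\sqrt c$. This yields, with a deterministic constant $\tilde C>0$,
\[
\norm{\falhat-\fdag}{s}\le\tilde C\left(\sigma\norm{Z}{\V'}\,\alpha^{\frac{\theta}{2}-1}+\frac{\delta}{\sqrt\alpha}+\sqrt{\varphi_{\mathrm{app}}\left(\alpha\right)}\right),
\]
the three summands being the stochastic noise term, the deterministic noise term, and the noise-free approximation error.

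Next I would collect the elementary properties of $\Sigma$ and $\widetilde\Sigma$. Since $\varphi_{\mathrm{app}}$ is non-decreasing (Remark~\ref{rem:app_err}(b)), continuous on $(0,\infty)$ (it is the convex function $\left(-\psi_{u,s,a}\right)^*$, finite on the negative half-line by coercivity, composed with $\alpha\mapsto-1/\alpha$), and tends to $0$ as $\alpha\to0$ (Remark~\ref{rem:app_err}(d)), and since $\theta\in(0,1)$, both $\Sigma\left(\alpha\right)=\sqrt\alpha\sqrt{\varphi_{\mathrm{app}}(\alpha)}$ and $\widetilde\Sigma\left(\alpha\right)=\alpha^{1-\theta/2}\sqrt{\varphi_{\mathrm{app}}(\alpha)}$ are continuous, strictly increasing and vanish at $0$; hence $\Sigma^{-1},\widetilde\Sigma^{-1}$ are well-defined near $0$ and tend to $0$. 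By their very definition $\tfrac{\delta}{\sqrt\alpha}\le\sqrt{\varphi_{\mathrm{app}}(\alpha)}$ iff $\alpha\ge\Sigma^{-1}(\delta)$, and $\sigma\norm{Z}{\V'}\alpha^{\theta/2-1}\le\sqrt{\varphi_{\mathrm{app}}(\alpha)}$ iff $\alpha\ge\widetilde\Sigma^{-1}\!\left(\sigma\norm{Z}{\V'}\right)$. Moreover \eqref{eq:constant_out} propagates to $\widetilde\Sigma$: from $\varphi_{\mathrm{app}}(C\alpha)\ge\varphi_{\mathrm{app}}(\alpha)$ for $C\ge1$ one gets $\widetilde\Sigma(C\alpha)\ge C^{1-\theta/2}\widetilde\Sigma(\alpha)$, hence $\widetilde\Sigma^{-1}(K\sigma)\le K^{2/(2-\theta)}\widetilde\Sigma^{-1}(\sigma)$ for $K\ge1$, and an analogous one-sided bound holds for $\Sigma$.

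Choosing $\alpha$ as in \eqref{eq:apriori_choice2} makes $\alpha$ comparable to $\Sigma^{-1}(\delta)+\widetilde\Sigma^{-1}(\sigma)$; in particular $\alpha\to0$ as $\max\{\delta,\sigma\}\to0$. Using \eqref{eq:constant_out} once more to move the comparison constant out of $\varphi_{\mathrm{app}}$, this $\alpha$ dominates $\Sigma^{-1}(\delta)$ and, a.s., a positive multiple of $\widetilde\Sigma^{-1}(\sigma\norm{Z}{\V'})$ (here $\norm{Z}{\V'}<\infty$ a.s.\ because $\iota$ is Hilbert--Schmidt). Hence both noise terms of the decomposition are bounded, up to an a.s.\ finite constant, by $\sqrt{\varphi_{\mathrm{app}}(\alpha)}$, which gives
\[
\norm{\falhat-\fdag}{s}=\mathcal O\!\left(\sqrt{\varphi_{\mathrm{app}}\!\left(\Sigma^{-1}(\delta)+\widetilde\Sigma^{-1}\!\left(\sigma\norm{Z}{\V'}\right)\right)}\right).
\]
Finally, writing $\sigma\norm{Z}{\V'}\le K(\omega)\sigma$ with $K(\omega)<\infty$ a.s., the inequalities of the previous paragraph together with \eqref{eq:constant_out} replace $\widetilde\Sigma^{-1}(\sigma\norm{Z}{\V'})$ by $\widetilde\Sigma^{-1}(\sigma)$ at the cost of a factor depending only on $K(\omega)$, which yields the claimed a.s.\ rate.

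\textbf{Main obstacle.} I expect the delicate point to be the bookkeeping in the last two steps: stating the doubling/regular-variation estimates for $\Sigma$, $\widetilde\Sigma$ and their inverses with the correct (and occasionally two-sided) inequalities, so that the parameter prescribed in \eqref{eq:apriori_choice2} is genuinely deterministic while the random norm $\norm{Z}{\V'}$ can still be pulled cleanly out of $\varphi_{\mathrm{app}}$ at the very end, and checking that all inverses occurring are defined on the relevant ranges. The convex-analytic content is otherwise the same as in the deterministic case.
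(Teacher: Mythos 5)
Your proposal is correct and follows essentially the same route as the paper: take the error estimate of Theorem~\ref{thm:stochastic}, pull the constant out of $\left(-\psi_{u,s,a}\right)^*$ via \eqref{eq:constant_out}, take square roots, and balance the three terms using the definitions of $\Sigma$, $\widetilde{\Sigma}$ together with the parameter choice \eqref{eq:apriori_choice2} and the monotonicity/doubling properties of $\varphi_{\mathrm{app}}$. The only (harmless) difference is bookkeeping: the paper simply absorbs the a.s.\ finite factor $\norm{Z}{\V'}$ into the random constant at the outset, whereas you carry it along and remove it at the end via the doubling estimate for $\widetilde{\Sigma}^{-1}$, which is fine but not needed for an almost sure rate.
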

\begin{proof}
According to Theorem \ref{thm:stochastic} we have
\[
\frac{1}{C} \norm{\falhat-\fdag}{s}^2 \leq \sigma^2 \alpha^{\theta-2} + \frac{\delta^2}{\alpha} + \varphi_{\mathrm{app}} \left(\alpha\right)
\]
a.s. for some sufficiently large $C> 0$, where we also exploited $\norm{Z}{\V'} < \infty$ a.s. and Remark \ref{rem:app_err}(c). Via $\delta = \Sigma\left(\Sigma^{-1}\left(\delta\right)\right) = \sqrt{\Sigma^{-1}\left(\delta\right)} \sqrt{\varphi_{\mathrm{app}}\left(\Sigma^{-1}\left(\delta\right)\right)}$ and analogously $\sigma = \left(\widetilde{\Sigma}^{-1} \left(\sigma\right)\right)^{1- \frac{\theta}{2}} \sqrt{\varphi_{\mathrm{app}} \left(\widetilde{\Sigma}^{-1} \left(\sigma\right)\right)}$ we hence obtain
\begin{align*}
\norm{\falhat-\fdag}{s} &\lesssim \frac{\sigma}{\alpha^{1-\frac{\theta}{2}}} + \frac{\delta}{\sqrt{\alpha}} + \sqrt{\varphi_{\mathrm{app}} \left(\alpha\right)} \\
& \lesssim \frac{\sigma}{\left(\widetilde{\Sigma}^{-1} \left(\sigma\right)\right)^{1-\frac{\theta}{2}}} + \frac{\delta}{\sqrt{\Sigma^{-1}\left(\delta\right)}} + \sqrt{\varphi_{\mathrm{app}} \left(\alpha\right)} \\
&=  \sqrt{\varphi_{\mathrm{app}}\left(\Sigma^{-1}\left(\delta\right)\right)} + \sqrt{\varphi_{\mathrm{app}} \left(\widetilde{\Sigma}^{-1} \left(\sigma\right)\right)} + \sqrt{\varphi_{\mathrm{app}} \left(\alpha\right)}\\
& \lesssim \sqrt{\varphi_{\mathrm{app}}\left(\Sigma^{-1} \left(\delta\right) + \widetilde{\Sigma}^{-1} \left(\sigma\right)\right)}
\end{align*}
a.s., where $\lesssim$ means up to a multiplicative constant which can change from line to line, but is independent of $\alpha, \sigma$ and $\delta$.
\end{proof}

\begin{rem}
\begin{enumerate}
\item[(a)] It is immediately be clear that the convergence rate in Corollary \ref{cor:stoch} can also be obtained under an a posteriori choice of $\alpha$ as in Corollary \ref{cor:apost}.
\item[(b)] In the case $\varphi\left(t\right) = t^\gamma$ with some $0 < \gamma \leq 1$ as discussed in Example \ref{ex:Hoelder}, we compute
\[
\Sigma\left(\alpha\right) = \alpha^{\frac{1}{2\left(1-q\right)}}, \qquad q = \gamma \frac{u-s}{a+u}
\]
and hence it can be seen immediately that the a priori choices in \eqref{eq:apriori_choice} and \eqref{eq:apriori_choice2} and also the obtained rates in Theorem \ref{thm:deterministc} and Theorem \ref{thm:stochastic} with $\sigma = 0$ coincide.
\end{enumerate}
\end{rem}

Concerning the assumptions of Theorem \ref{thm:stochastic}, we finally  mention the following:
\begin{rem}
Suppose that $F$  maps locally Lipschitz continuous from $\X_s$ into $\V$, i.e. there is some $C = C \left(\rho\right)$ such that
\[
\norm{F\left(f_1\right) - F\left(f_2\right)}{\V} \leq C \left(\rho\right) \norm{f_1-f_2}{s}
\]
for all $f_1, f_2 \in D_\rho^s \left(0\right)$, and that $C \left(\rho\right) = o \left(\rho\right)$. Then any parameter choice $\alpha = \alpha_*$ such that
\[
\max\left\{ \frac{\sigma}{\alpha}, \frac{\delta^2}{\alpha}\right\} \to 0
\]
as $\sigma, \delta \to 0$ yields $\falhat \in D^s_{\bar{\rho}}\left(0\right)$ a.s. with a suitable $\bar\rho>0$ as $\sigma, \delta \to 0$.
\end{rem}
\begin{proof}
Similar to the proof of Theorem \ref{thm:stochastic} we obtain from the minimizing property
\begin{align*}
\alpha \norm{\falhat}{s}^2  &\leq  \S{F\left(\fdag\right)} - \S{\gdag} + \alpha \norm{\fdag}{s}^2 \\
& = -\frac12 \norm{F\left(\falhat\right) - \gdag}{\Y}^2 + \left\langle F\left(\falhat\right) - \gdag, \sigma Z + \delta \xi\right\rangle+ \alpha \norm{\fdag}{s}^2\\
& \leq \frac{\delta^2}{2} + \sigma \norm{Z}{\V'} \norm{F\left(\falhat\right) - \gdag}{\V} + \alpha \norm{\fdag}{s}^2\\
& \leq \frac{\delta^2}{2} + \sigma \norm{Z}{\V'} C \left(\max\left\{\norm{\falhat}{s}, \norm{\fdag}{s}\right\}\right) + \alpha \norm{\fdag}{s}^2.
\end{align*}
As $\norm{Z}{\V'}$ is a.s. bounded, this implies by $C\left(\rho\right) = o \left(\rho\right)$ the claim.
\end{proof}

\section*{Appendix}

In this appendix we discuss different approaches to derive conditional stability estimates as in Assumption \ref{ass:basic2}.

\subsection*{Variant (A): based on local structural conditions on the nonlinearity of $F$}

\begin{ex}[$Q=D_r(\fdag),\;a>0,\,\theta=0,$ strong nonlinearity conditions of tangential cone type] \label{ex:Q1}
This situation assumes that the forward operator $F$ is G\^ateaux or Fr\'echet differentiable at $\fdag$ with the derivative $F^\prime(\fdag) \in \mathcal{L}(\X,\Y)$. Moreover, it is
characterized by the pair of conditions
\begin{equation} \label{eq:illposednessdegree}
\|h\|_{-a} \le \bar K \,\|F^\prime(\fdag)\,h\|_{\Y} \qquad \mbox{for all} \quad h \in X
\end{equation}
and
\begin{equation} \label{eq:generalnonlinearity}
\|F^\prime(\fdag)(f-\fdag)\|_{\Y} \le \tilde K\, \varphi(\|F(f)-F(\fdag)\|_{\Y})  \qquad \mbox{for all} \quad f \in D_r(\fdag),
\end{equation}
where $\varphi$ is a concave index function and $\bar K,\tilde K$ as well as $r$ are positive constants. The first condition \eqref{eq:illposednessdegree} often occurs in regularization literature for Hilbert scale models (cf., e.g.,~\cite{Neubauer92,Neubauer00,Tautenhahn94,Tautenhahn98}) , sometimes also in the stronger version $\|h\|_{-a} \sim \|F^\prime(\fdag)\,h\|_{\Y}$ for all $h \in \X$, where $a>0$ denotes the \textit{degree of ill-posedness} locally at $\fdag$ (cf.~\cite[Sec.~10.4]{EnglHankeNeubauer96}). In the form with a general concave index function $\varphi$, the second condition \eqref{eq:generalnonlinearity} was introduced and exploited in \cite{BoHo10}.
In the special case of monomials $\varphi(t)=t^\kappa$, however, with exponents $0<\kappa \le 1$ and associated with H\"older rates this condition plays some role in the context of the \textit{degree} $(\kappa,\zeta)$ \textit{of nonlinearity} of $F$ at $\fdag$ introduced in \cite{HofSch94}, where the inequality
\begin{equation} \label{eq:degreenonlinearity}
\|F(f)-F(\fdag)-F^\prime(\fdag)(f-\fdag)\|_{\Y} \le \hat K\,\|F(f)-F(\fdag)\|^\kappa_{\Y}\,\|f-\fdag\|^\zeta  \qquad \mbox{for all} \quad f \in D_r(\fdag)
\end{equation}
for exponents $0 \le\kappa \le 1$, $\;0\le \zeta \le 2\,$ and a constant $\hat K>0$ has been considered.

For strong structural conditions of nonlinearity of interest in this example and in particular for condition \eqref{eq:generalnonlinearity},
exponents $\kappa>0$ are required in \eqref{eq:degreenonlinearity}. Evidently, by the triangle inequality
$$\|F(f)-F(\fdag)-F^\prime(\fdag)(f-\fdag)\|_{\Y} \le  \|F^\prime(\fdag)(f-\fdag)\|_{\Y}+\|F(f)-F(\fdag)\|_{\Y} $$
we obtain from  \eqref{eq:generalnonlinearity} with $\varphi(t)=t^\kappa\;(0<\kappa \le 1)$ a \textit{tangential cone} type condition
\begin{equation} \label{eq:nlkappa}
\|F(f)-F(\fdag)-F^\prime(\fdag)(f-\fdag)\|_{\Y} \le \hat {\bar K}\,\|F(f)-F(\fdag)\|^\kappa_{\Y}  \qquad \mbox{for all} \quad f \in D_r(\fdag)
\end{equation}
with a constant $\hat{\bar K}>0$ depending on $\tilde K$ and $r$.
Vice versa, we also derive by the triangle inequality a condition  \eqref{eq:generalnonlinearity} with $\varphi(t)=t^\kappa\;(0<\kappa \le 1)$
from  \eqref{eq:nlkappa}.

Weaker structural conditions of nonlinearity, which will be discussed in Example~\ref{ex:Q2} below, are characterized by the fact that $F$ at $\fdag$ does not allow for exponents $\kappa>0$ in \eqref{eq:degreenonlinearity},
but exponents $\kappa=0$ and $1<\zeta \le 2$ are typical in case of a H\"older continuity of the derivative $F^\prime(f)$ in a neighborhood of $\fdag$.

Now we come back to the pair \eqref{eq:illposednessdegree} and \eqref{eq:generalnonlinearity} of conditions and derive for this situation the corresponding structure of the set $Q$ in Assumption~\ref{ass:basic2}.
Combining both inequalities we immediately find a conditional stability estimate \eqref{eq:stability} of the form
\begin{equation} \label{eq:strongdegree}
\|f-\fdag\|_{-a} \le R \,\varphi(\|F(f)-F(\fdag)\|_{\Y})  \qquad \mbox{for all} \quad f \in D_r(\fdag)
\end{equation}
with some constant $R>0$ and for the subset $Q=D_r(\fdag)$ of the closed intersected ball $D_\rho^\theta(0)$, where $\theta=0$ and $\rho=\|\fdag\|+r$.

Let us close this example with some special application using $\X=\Y=L^2(0,T)$ and $D(F)=\X$. We consider here the family of forward operators
\begin{equation} \label{eq:exop}
[F(f)](t)=c_0\,\exp\left(c_1\,\int_0^t f(\tau)d\tau \right) \quad (0 \le t \le T)
\end{equation}
with constants $c_0,c_1>0$. Such operators (\ref{eq:exop}) occur in various types of parameter identification problems, e.g.~for finding time-dependent growth rate functions in ordinary differential equation
models and for identifying time-dependent conductivity functions in heat equation models (cf.~for more details \cite{Hof98}).
The corresponding operator equation (\ref{eq:opeq})
is locally ill-posed everywhere on $\X$. Moreover, the operator $F$ is continuously Fr\'echet
differentiable everywhere on $\X$ and its
Fr\'echet derivative attains the form
$$[F^\prime (f) h](t)=c_1\,[F(f)](t) \int_0^t h(\tau)d\tau  \quad (0 \le t \le T,\;\; h \in \X). $$
Furthermore, we have for some constant $\hat K>0$ and for all $f \in \X$
\begin{equation} \label{eq:nldegree}
  \|F(f)-F(\fdag)-F^\prime(\fdag)(f-\fdag)\|_{\Y} \le
  \hat K\,\|F(f)-F(\fdag)\|_{\Y}\,\|f-\fdag\|,
\end{equation}
which indicates without an upper bound for the radius $r>0$ of $D_r(\fdag)$ a degree $(1,1)$ of nonlinearity (cf.~\eqref{eq:degreenonlinearity}).
Applying the triangle inequality to (\ref{eq:nldegree}) yields the estimate
\begin{equation} \label{eq:nldegree1}
 \|F^\prime(\fdag)(f-\fdag)\|_{\Y}\le (\hat K\,\|f-\fdag\|+1)\,\|F(f)-F(\fdag)\|_{\Y} \le \tilde K \,\|F(f)-F(\fdag)\|_{\Y}
\end{equation}
 of the form \eqref{eq:generalnonlinearity} with $\varphi(t)=t$ and valid for all $f \in D_r(\fdag)$, where $\tilde K=r\hat K+1$.
Taking into account the estimate (\ref{eq:nldegree1}) one can consider the integration operator
\begin{equation}\label{eq:J}
[Jh](t):=\int_0^t h(\tau)d\tau \quad (0 \le t \le T)
\end{equation}
mapping in
$\X=L^2(0,T)$ in the context of the related Hilbert scale $\{\X_\tau\}_{\tau \in
  \mathbb{R}}$ generated by the operator $L=(J^*J)^{-1/2}$.
Because there is a constant $0<\underline c < \infty$ with $\underline c \le  [F(\fdag)](t) \;(0 \le t
\le T)$ for the multiplier function in $F^\prime (\fdag)$, we also find a constant $0< c_{down}< \infty$
such that we have the estimate $c_{down} \|f-\fdag\|_{-1} =c_{down} \|J(f-\fdag)\|  \le \|F^\prime(\fdag)(f-\fdag)\|_{\Y}$, which is valid for all $f \in \X$ and can be rewritten in form of an inequality \eqref{eq:illposednessdegree}.
Consequently, we find for all $r>0$ a conditional stability estimate of type (\ref{eq:stability}) with $a=1$ and $\varphi(t)=t$  as
$$\|f-\fdag\|_{-1} \le R\, \|F(f)-F(\fdag)\|_{\Y}\qquad \mbox{for all} \quad f \in D_r(\fdag),$$
where $R=\frac{Kr+1}{c_{down}}$.
\end{ex}

\begin{ex}[$Q=D_r(\fdag)\cap B^\theta_\tau(\fdag),\;a>0,\,\theta>0\,$, weak nonlinearity conditions of H\"older type] \label{ex:Q2}
This situation assumes that the forward operator $F$ is continuously Fr\'echet differentiable in a neighborhood of $\fdag$ with derivatives $F^\prime(f)\in \mathcal{L}(\X,\Y)$ for $f \in D_r(\fdag)$. Moreover,
 it is characterized by the condition
\begin{equation} \label{eq:illposednessdegree2}
\|h\|_{-a} \le \bar K \,\|F^\prime(\fdag)\,h\|_{\Y} \qquad \mbox{for all} \quad h \in X,
\end{equation}
which already occurred in Example~\ref{ex:Q1}, in combination with a local H\"older continuity condition
\begin{equation} \label{eq:generalnonlinearity2}
\|F^\prime(f)-F^\prime(\fdag)\|_{\mathcal{L}(\X,\Y)} \le \check{K}\, \|f-\fdag\|^\eta  \qquad \mbox{for all} \quad f \in D_r(\fdag)
\end{equation}
for the Fr\'echet derivatives, where $\check{K}>0$ is some constant and $0<\eta \le 1$ is the associated H\"older exponent. As it is well-known, one derives immediately from \eqref{eq:generalnonlinearity2} by using
the mean value theorem in integral form the estimate
\begin{equation} \label{eq:Hoeldernonlinearity}
\|F(f)-F(\fdag)-F^\prime(\fdag)(f-\fdag)\|_{\Y} \le \frac{\check{K}}{2}\, \|f-\fdag)\|^{\eta+1}  \qquad \mbox{for all} \quad f \in D_r(\fdag).
\end{equation}
Thus, the degree of nonlinearity $(\kappa,\zeta)$ of $F$ at $\fdag$ (cf.~\eqref{eq:degreenonlinearity}) takes place with $\kappa=0$ and $1<\zeta=\eta+1 \le 2$. As we will see the
condition \eqref{eq:generalnonlinearity2}, which is in some sense weaker than \eqref{eq:nlkappa}, requires in this situation a further restriction of the admissible set $Q$ for obtaining
a conditional stability estimate (\ref{eq:stability}).
\begin{pro} \label{pro:Q2}
Let for $a>0$ and $0<\eta \le 1$ the conditions \eqref{eq:illposednessdegree2} and  \eqref{eq:generalnonlinearity2} hold. Moreover let $\fdag \in \X_{\frac{a}{\eta}} \cap D(F)$. Then
for $\tau<\left(\frac{\bar{K}\,\check{K}}{2}\right)^{-1/\eta}$  the conditional stability estimate
\begin{equation}\label{eq:stability2}
\|f - \fdag\|_{-a} \leq R \, \|F(f)-F(\fdag)\|_{\Y}
\end{equation}
holds for all $f \in D_r(\fdag)\cap D_\tau^{\frac{a}{\eta}}(\fdag)$ with the constant $R=\frac{\bar K}{1-\tilde \tau}$, where we set $\tilde \tau:=\frac{\bar{K}\,\check{K}\tau^\eta}{2}<1$.
\end{pro}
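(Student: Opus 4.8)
The plan is to derive the conditional stability estimate \eqref{eq:stability2} by combining the ill-posedness-type bound \eqref{eq:illposednessdegree2} with the Taylor remainder estimate \eqref{eq:Hoeldernonlinearity}, absorbing the nonlinear remainder into the left-hand side. First I would start from \eqref{eq:illposednessdegree2} applied to $h = f - \fdag$, which gives
\[
\|f-\fdag\|_{-a} \le \bar K \, \|F^\prime(\fdag)(f-\fdag)\|_{\Y}.
\]
Then I would insert $F(f)-F(\fdag)$ by the triangle inequality,
\[
\|F^\prime(\fdag)(f-\fdag)\|_{\Y} \le \|F(f)-F(\fdag)\|_{\Y} + \|F(f)-F(\fdag)-F^\prime(\fdag)(f-\fdag)\|_{\Y},
\]
and bound the last term using \eqref{eq:Hoeldernonlinearity} by $\frac{\check K}{2}\|f-\fdag\|^{\eta+1}$, which holds since $f \in D_r(\fdag)$.

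Next I would use the smoothness assumption $\fdag \in \X_{a/\eta}$ together with $f \in D_\tau^{a/\eta}(\fdag)$ to control the factor $\|f-\fdag\|^{\eta+1}$. The key trick is to write $\|f-\fdag\|^{\eta+1} = \|f-\fdag\|^\eta \cdot \|f-\fdag\|$ and to estimate one copy of $\|f-\fdag\|$ by interpolating between $\X_{-a}$ and $\X_{a/\eta}$: since $-a < 0 < a/\eta$, the interpolation inequality \eqref{eq:interpolationX} (with $t=0$, $s = a/\eta$, the ``$a$'' there equal to our $a$) gives
\[
\|f-\fdag\| \le \|f-\fdag\|_{-a}^{\frac{a/\eta}{a/\eta + a}} \|f-\fdag\|_{a/\eta}^{\frac{a}{a/\eta+a}} = \|f-\fdag\|_{-a}^{\frac{1}{1+\eta}} \|f-\fdag\|_{a/\eta}^{\frac{\eta}{1+\eta}}.
\]
Raising to a suitable power, or more directly observing that we only need $\|f-\fdag\|^{\eta+1}$, I would combine this with the bound $\|f-\fdag\|^\eta \le \tau^\eta$ coming from $f \in B_\tau^{a/\eta}(\fdag)$ — wait, that is the $\X_{a/\eta}$-norm, so actually $\|f-\fdag\|_{a/\eta} \le \tau$; using \eqref{eq:m} one also controls the $\X_0$-norm, but the cleanest route is: bound $\|f-\fdag\|^{\eta+1}$ by combining \eqref{eq:interpolationX} raised to the power $\eta+1$ with $\|f-\fdag\|_{a/\eta}^\eta \le \tau^\eta$, which yields $\|f-\fdag\|^{\eta+1} \le \tau^\eta \, \|f-\fdag\|_{-a}$. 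Plugging this in, the term $\bar K \cdot \frac{\check K}{2}\tau^\eta \|f-\fdag\|_{-a} = \tilde\tau \|f-\fdag\|_{-a}$ appears on the right, and since $\tilde\tau < 1$ by the restriction on $\tau$, I absorb it into the left-hand side and divide by $1-\tilde\tau$, obtaining \eqref{eq:stability2} with $R = \bar K/(1-\tilde\tau)$.

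The main obstacle I anticipate is getting the interpolation exponents to line up exactly so that the power of $\|f-\fdag\|_{-a}$ produced on the right is precisely the \emph{first} power (not a higher power that cannot be absorbed linearly), and correspondingly that the leftover factor is exactly $\|f-\fdag\|_{a/\eta}^\eta$, bounded by $\tau^\eta$. This is what forces the specific choice of the Hilbert-scale index $a/\eta$ for the ball containing $f$ and $\fdag$: with $t=0$, $s=a/\eta$ in \eqref{eq:interpolationX} the exponent on $\|\cdot\|_{-a}$ is $\frac{s-t}{s+a} = \frac{a/\eta}{a/\eta+a} = \frac{1}{1+\eta}$, so $\|f-\fdag\|^{1+\eta} \le \|f-\fdag\|_{-a}\|f-\fdag\|_{a/\eta}^{\eta}$ exactly. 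Everything else is routine: the triangle inequality, the Taylor estimate \eqref{eq:Hoeldernonlinearity} (already derived from \eqref{eq:generalnonlinearity2} via the mean value theorem), and a final rearrangement. One should also note in passing that $f \in D_r(\fdag)$ is needed so that \eqref{eq:generalnonlinearity2}, hence \eqref{eq:Hoeldernonlinearity}, is available, which is why $Q = D_r(\fdag) \cap D_\tau^{a/\eta}(\fdag)$ is the natural admissible set, matching the pattern $Q = D_r(\fdag) \cap B_\tau^\theta(\fdag)$ with $\theta = a/\eta > 0$ announced in the example's header.
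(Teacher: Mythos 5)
Your proof is correct and follows essentially the same route as the paper: combine \eqref{eq:illposednessdegree2} with the Taylor remainder bound \eqref{eq:Hoeldernonlinearity} via the triangle inequality, use the interpolation inequality $\|h\|^{\eta+1}\le\|h\|_{-a}\|h\|_{a/\eta}^{\eta}$ (which you correctly re-derive from \eqref{eq:interpolationX} with $t=0$, $s=a/\eta$), bound $\|f-\fdag\|_{a/\eta}\le\tau$, and absorb the resulting $\tilde\tau\,\|f-\fdag\|_{-a}$ term into the left-hand side to obtain $R=\bar K/(1-\tilde\tau)$. No gaps; your identification of why the index $a/\eta$ is forced matches the paper's construction exactly.
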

\begin{proof}
From \eqref{eq:illposednessdegree2} and \eqref{eq:Hoeldernonlinearity} we find with the triangle inequality the estimate
$$\|f-\fdag\|_{-a} \le \bar K \,\|F(f)-F(\fdag)\|_{\Y} +\bar K \,\|F(f)-F(\fdag)-F^\prime(\fdag)(f-\fdag)\|_{\Y} $$
$$ \le \bar K \,\|F(f)-F(\fdag)\|_{\Y} + \frac{\bar K\,\check{K}}{2}\, \|f-\fdag\|^{\eta+1} \qquad \mbox{for all} \quad f \in D_r(\fdag). $$
By using the interpolation inequality $\|h\|^{\eta+1} \le \|h\|_{-a}\,\|h\|^\eta_{a/\eta},$ valid for all $h \in \X_{a/\eta},$
we can further estimate for $f \in \X_{a/\eta}$ as
\begin{equation} \label{eq:nearly}
\|f-\fdag\|_{-a} \le \bar K \,\|F(f)-F(\fdag)\|_{\Y} + \left(\frac{\bar K\, \check{K}}{2}\,\|f-\fdag\|^\eta_{a/\eta}\right)\,\|f-\fdag\|_{-a}.
\end{equation}
Taking $f \in D_r(\fdag)\cap D_\tau^{\frac{a}{\eta}}(\fdag)$ with $\tau<\left(\frac{\bar{K}\,\check{K}}{2}\right)^{-1/\eta}$ and setting $\tilde \tau:=\frac{\bar{K}\,\check{K}\tau^\eta}{2}<1$,
the inequality \eqref{eq:nearly} can be rewritten as
$$(1-\tilde \tau)\,\|f-\fdag\|_{-a} \le \bar K\, \|F(f)-F(\fdag)\|_{\Y},$$
which yields the conditional stability estimate \eqref{eq:stability2} and completes the proof.
\end{proof}

We note that the set $Q:= D_r(\fdag)\cap D_\tau^{\frac{a}{\eta}}(\fdag)$ of admissible elements in the conditional stability estimate \eqref{eq:stability2} is a subset of $D_\rho^{\frac{a}{\eta}}(0)$ for
some radius $\rho>0$ depending on $a,\eta,\tau$ and $\fdag$.

\begin{rem} \label{rem:nonsmooth}
An inspection of the above proof shows that \eqref{eq:stability2} is also valid if $\fdag \notin \X_{\frac{a}{\eta}}$, but if we have instead
\begin{equation} \label{eq:aux}
f \in D_r(\fdag)\quad \mbox{and} \quad f-\fdag \in \X_{\frac{a}{\eta}} \quad \mbox{such that} \quad \|f-\fdag\|_{\frac{a}{\eta}} \le \tau \quad \mbox{for}\quad \tau<\left(\frac{\bar{K}\,\check{K}}{2}\right)^{-1/\eta}.
\end{equation}
Unfortunately, it is difficult to exploit the conditional stability estimate \eqref{eq:stability2} in that case for the stable approximate solution of equation \eqref{eq:opeq}, because approximate solutions
$f$ to $\fdag$ have to satisfy the condition \eqref{eq:aux}. This, however, cannot be expected if $f$ is a regularized solution $\falhat$ from \eqref{eq:tik}, independent of the choices of $s$ and $\alpha$.
\end{rem}

Let us conclude this example with the consideration of the special case $\eta=1$ in \eqref{eq:generalnonlinearity2}, which requires Lipschitz continuity of the Fr\'echet derivatives in $D_r(\fdag)$.
Then we assume $\fdag \in \X_a \cap D(F)$, and the admissible set in the conditional stability estimate \eqref{eq:stability2} attains the form $Q=D_r(\fdag) \cap D_\tau^a(\fdag)$ with a radius $\tau<\frac{2}{\bar{K}\,\check{K}}$ of the second intersected ball. As an illustration of this case we briefly recall the \textit{autoconvolution operator}
\begin{equation} \label{eq:auto}
[F(f)](s)= \int \limits_0^s f(s-t)\,f(t)\, dt \quad (0 \le t \le 1)
\end{equation}
mapping in $\X=\Y=L^2(0,1)$ with domain $D(F)=\X$, which was comprehensively analyzed in the literature with applications in statistics, spectroscopy and laser optics (see, e.g., \cite{GoHo94} and \cite{BuerHof15,Flemmingbuch18}). For noncompact nonlinear operator $F$ from \eqref{eq:auto} we have the compact Fr\'echet derivative
$$[F^\prime (f) h](s)=2\, \int_0^s f(s-t)\,h(t)d t  \quad (0 \le s \le 1,\;\; h \in \X). $$
This gives the relations
$$\|F(f)-F(\fdag)-F^\prime(\fdag)(f-\fdag)\|_{\Y}= \|F(f-\fdag)\|_{\Y} \le \|f-\fdag\|^2  \qquad \mbox{for all} \quad f \in \X,$$
which coincides with \eqref{eq:Hoeldernonlinearity} for $\check{K}=2$ and $\eta=1,$ but here for arbitrary large radii $r>0$.

Now we restrict our focus on the autoconvolution problem further to the specific solution
\begin{equation} \label{eq:1}
\fdag(t) \equiv 1 \quad (0 \le t \le 1),
\end{equation}
where
$$\|F^\prime(\fdag)\,h\|_{\Y} =2\,\|J\,h\|_{\Y}= 2\,\|(J^*J)^{1/2}\,h\|_{\X}  \qquad \mbox{for all} \quad h \in \X,$$
with the simple integration operator $J$ from \eqref{eq:J} such that $L=(J^*J)^{-1/2}$ defines the Hilbert scale. Hence, we have
$$\|f-\fdag\|_{-1}=\frac{1}{2}\,\|F^\prime(\fdag)(f-\fdag)\|_{\Y}\qquad \mbox{for all} \quad f \in \X.$$
Unfortunately, for the solution $\fdag$ from \eqref{eq:1} with $\fdag(1) \not=0$ we have  $\fdag \in \X_\nu$ only for $\nu<1/2$ and hence $\fdag \notin \X_1$ (cf. e.g. \cite[Lem. 8]{gy99}). Then Proposition~\ref{pro:Q2} cannot be applied immediately in case of this solution $\fdag$, but Remark~\ref{rem:nonsmooth} is applicable and  for $a=1,\;\bar K=\frac{1}{2},\; \check{K}=2,$ $\eta=1,\;\tau<2$ and arbitrarily large $r>0$ we get for the autoconvolution operator $F$ from \eqref{eq:auto} and $\fdag$ from \eqref{eq:1} the conditional stability estimate
\begin{equation} \label{eq:autoCSE}
\|f-\fdag\|_{-1} \le \frac{1}{2-\tau}\,\|F(f)-F(\fdag)\|_{\Y}
\end{equation}
if $f \in D(F)$ and  $f-\fdag \in \X_1$ such that $\|f-\fdag\|_1 \le \tau.$
\end{ex}

\subsection*{Variant (B): based on global inequalities of the forward operator $F$}

\begin{ex}[$Q=D^\theta_\rho(0),\;a=0,\;\theta>0$, global conditional estimates] \label{ex:Q3}
The situation of this example is essentially different from those of Examples~\ref{ex:Q1} and \ref{ex:Q2}, because the focus is now on more global conditional stability estimates
\begin{equation}\label{eq:globalstability}
\|f - \tilde f\| \leq R \, \varphi\left(\|F(f)-F(\tilde f)\|_{\Y}\right) \qquad \mbox{for all} \quad f,\tilde f \in D_\rho^\theta(0),
\end{equation}
where $\theta$ is a positive number and the multiplier $R$ may depend on the radius $\rho>0$. For $\tilde f=\fdag$, the estimate \eqref{eq:globalstability} is a special case of \eqref{eq:stability}
with $a=0$. The corresponding set $Q$ collects elements $f \in D(F)$ with the property $\|f\|_\theta \le \rho$. Hence, all the consequences of \eqref{eq:stability} concerning approximate (regularized) solutions are valid under the condition \eqref{eq:globalstability}, too. However, under \eqref{eq:globalstability} such consequences are \textit{uniformly} valid for all $\fdag \in Q$ and
moreover no derivatives of the forward operator $F$ are required.

Based on the seminal paper \cite{ChengYamamoto00} the stable approximate solution of inverse problems under a conditional stability estimate \eqref{eq:globalstability} was studied by numerous authors in the past years.
Estimates of the form \eqref{eq:globalstability} can be verified in large numbers for parameter identification problems in differential equations by powerful tools of PDE theory like Carleman estimates.
With respect to concrete applications we refer to \cite{ChengHofmannLu14,ChengYamamoto00} and further literature mentioned
 therein. For a glimpse of such examples, we briefly recall here in the following a parameter identification problem, which was
comprehensively outlined in \cite{HofmannYamamoto10} (see also \cite[Sect.~5.2]{EggerHof18}).

We consider for $\X=\Y=L^2(0,T)$ the identification of the parameter function $f \in \X$ in the reaction-diffusion problem
$$\begin{array}{ccl}
\partial_t u(\xi,t)- \Delta u(\xi,t)+f(t)\,u(\xi,t)=0 & \quad & \mbox{for}\quad\xi\in \Omega,\;0<t\le T,\\
 \partial_n u(\xi,t)=0 & & \mbox{for}\quad\xi \in \partial \Omega,\;0 <t  \le T,\\
 u(\xi,0)=u_0(\xi) & &  \mbox{for}\quad\xi \in \Omega,
\end{array}$$
from integral data $$g(t)=\int \limits_{\xi \in \Omega} u(\xi,t) d\xi\qquad (0 \le t \le T)$$
of the state variable $u$. The mapping $f \in L^2(0,T) \mapsto \int \limits_{\xi \in \Omega} u(\xi,\cdot) d\xi \in L^2(0,T) $
defines the forward operator  $F:D(F) \subset \X \to \Y$ with domain $D(F)=\{f \in L^2(0,T)~\big|~f \ge 0 \;\;\mbox{a.e.}\}$.
Then for $1/2<\theta<1$ one can show the existence of a constant $R=R(\rho)$ such that
$$\|f-\tilde f\|_{L^2(0,T)} \le R\,\|F(f)-F(\tilde f)\|^{\frac{\theta}{\theta+1}}_{L^2(0,T)} $$
whenever $f,\tilde f \in D(F)$ and $\|f\|_{H^\theta(0,T)}\le \rho,\;\|\tilde f\|_{H^\theta(0,T)}\le \rho$.
This is a H\"older-type conditional stability estimate of the form \eqref{eq:globalstability} with the strictly concave index function $\varphi(t)=t^\frac{\theta}{\theta+1}$ if the Hilbert scale is generated in such a way that
$X_\nu=H^\nu(0,T)$ for $0 \le \nu \le 1$.
\end{ex}

\begin{ex}[Global conditional estimates, relation to variational source conditions]\label{ex:stab}
As a final example, which is also related to variational source conditions, we consider the problem of reconstructing the refractive index $n = 1-\fdag$ from far field data $u^\infty$ in the acoustic scattering problem
\begin{subequations}
\begin{align}
u(x) & = \exp\left(\textup{i} \kappa x \cdot d\right)+ u^s \left(x\right), \label{eq:total_field}\\
\Delta u + \kappa^2 n u & = 0 && \text{in }\R^3, \\
\frac{\partial u^s}{\partial r} - \textup{i} \kappa u^s & = \mathcal O \left(\frac{1}{r^2}\right) &&\text{as } r= \left|x\right| \to \infty, \label{eq:sommerfeld} \\
u\left(x\right)&= \exp\left(\textup{i} \kappa x \cdot d\right) + \frac{\exp\left(\textup{i}\kappa r\right)}{r} \left(u^\infty \left(\hat x\right) + \mathcal O \left(\frac{1}{r^2}\right)\right) && \text{as } r= \left|x\right| \to \infty,
\end{align}
\end{subequations}
where the so-called \textit{Sommerfeld radiation condition} \eqref{eq:sommerfeld} is assumed to hold \textit{uniformly} for all directions $\hat x = x/r \in \mathbb S^2 = \left\{x \in \R^3 \mid \left|x\right| = 1\right\}$.

In practical applications, either one or several incident directions $d \in \mathbb S^2$ can be measured. Here we consider all directions $d$ as available and define $F \left(\fdag\right):= u^\infty$. This forward operator can be seen as a mapping from $L^\infty\left(\R^3\right)$ to $L^2 \left(\mathbb S^2 \times \mathbb S^2\right)$, and its natural domain of definition is
\[
D\left(F\right) := \left\{f \in L^\infty \left(\R^3\right)\mid \Im \left(f\right)\leq 0, \Re \left(f\right) \leq 1, \supp \left(f\right) \subset \left\{x \in \R^3 \mid \left|x\right| \leq \pi\right\}\right\},
\]
as for all $f \in D\left(F\right)$ the problem \eqref{eq:total_field}--\eqref{eq:sommerfeld} admits a unique solution.

For this problem, it has been shown in \cite{HohWei15} (cf. Theorem 2.4 and Corollary 2.5 ibidem) that a variational source condition and a conditional stability estimate hold true. More precisely, if $3/2 < m < s$ such that $s \neq 2 m + 3/2$ and $\fdag \in D \left(F\right) \cap H^s \left(\R^3\right)$ with the Fourier-based Sobolev space $H^s \left(\R^3\right)$, then the variational source condition \eqref{eq:vsc} with $-a = m$ and the function
\[
\varphi\left(t\right) = A \left(\ln\left(3 + t^{-1} \right)\right)^{-2\mu \theta}, \qquad \mu = \min\left\{1, \frac{s-m}{m+3/2}\right\}
\]
holds true for any $0 < \theta < 1$. This also implies the conditional stability estimate \eqref{eq:stability} with $-a = m$ and $\varphi$ as in the above formula. Note that the case $-a = m$ corresponds to $a < 0$ and is hence not covered by our analysis.
\end{ex}

\section*{Acknowledgments}
BH is supported by German Research Foundation (DFG) via grant HO 1454/12-1, and FW has also been supported by the DFG through CRC 755, subproject A07.

\end{document}